\let\mathcal\mathscr
\def\C{{\mathbb C}}
\def\Q{{\mathbb Q}}
\newtheorem{thm}{Theorem}[section]
\def\N{{\bf N}} 
\def\P{{\bf P}}
\def\R{{\bf R}}
\def\Q{{\bf Q}}
\def\C{{\bf C}}
\def\Spec{\mathop{\rm Spec}\nolimits}
\def\tilde{\widetilde}
\def\phi{\varphi}
\def\cC{{\mathcal C}}
\def\cL{{\mathcal L}}
\def\cM{{\mathcal M}}
\def\cO{{\mathcal O}}
\def\cX{{\mathcal X}}
\numberwithin{equation}{section}
\newtheorem{theorem}[thm]{Theorem}
\newtheorem{definition}[thm]{Definition}
\newtheorem{lemma}[thm]{Lemma}
\newtheorem{proposition}[thm]{Proposition}
\theoremstyle{remark}
\newtheorem{rem}[thm]{Remark}
\begin{document}

\title{Rational points of bounded height on entire curves}

\author{Carlo Gasbarri}\address{Carlo Gasbarri, IRMA, UMR 7501
 7 rue Ren\'e-Descartes
 67084 Strasbourg Cedex }
 \email{gasbarri@math.unistra.fr}

\date\today
%\thanks{Research supported by the project ANR-16-CE40-0008 FOLIAGE}

\keywords{Entire curves, Heights, rational points, Nevanlinna Theory}
\subjclass[2020]{11G50, 11J37, 30D35}
\maketitle

\begin{abstract}  Let $X$ be an affine or a projective variety defined over a number field $K$ and $\varphi:\C\to X(\C)$ be a holomorphic map with Zariski dense image. We estimate the number of rational points of height bounded by $H$ in the image of a disk of radius $r$ in terms of the the Nevanlinna characteristic function of $\varphi$ and $H$ in a way which generalize the classical Bombieri--Pila estimate to expanding domains. In general this bound is exponential but we show that for many values of $H$ and $r$, the bound is polynomial. 
\end{abstract}

\tableofcontents

\section {Introduction}

\

In the classical text \cite{La1}  S. Lang attributes to Schneider a variant of the following theorem (Thm. 2, page 12 of loc. cit.) and considers it one of the first attempts to axiomatize the theory of transcendence:

\begin{theorem} \label{lang} (Lang) $Let K$ be a number field. Let $F:\C\to\C^2$ be an holomorphic map of finite order $\rho$. Suppose we can find a sequence of subsets $S_n\subset S_{n+1}\subset\C$ and  constants $C>0$ and $\lambda>2$ such that
\begin{itemize}
\item[--] For every $z\in S_n$ we have $\vert z\vert \leq C\cdot n$;
\item[--] For every $n$ and every $z\in S_n$, we have $f(z)\in K^2$ and $h(f(z))\leq Cn^\rho$ (where $h(\cdot)$ is the logarithmic Weil height).
\item[--] $Card(S_n)\sim n^{\lambda\rho}$.
\end{itemize}
Then the image of $F$ is not $K$--Zariski dense (the image is contained in a curve defined over $K$).
\end{theorem}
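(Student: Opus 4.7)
The plan is to argue by contradiction in the style of Schneider and Lang. Assuming the image of $F=(f_1,f_2)$ is $K$--Zariski dense, I will use Siegel's lemma to produce a nonzero polynomial $P\in\cO_K[X,Y]$ whose composition with $F$ is entire of order $\leq\rho$ and vanishes on $S_N$, and then show by a Schwarz--Liouville balancing that it must also vanish on $S_M$ for an $M$ growing like a power of $N$; this will produce far too many zeros for an order-$\rho$ function and yield the contradiction.

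Fix $N$ large and choose $d=d_N$ to be the least integer with $\binom{d+2}{2}\geq 2|S_N|$, so $d\asymp N^{\lambda\rho/2}$. Writing $P(X,Y)=\sum_{i+j\leq d}a_{ij}X^iY^j$, the system $P(F(z))=0$ for $z\in S_N$ is a set of $|S_N|$ linear equations in $\binom{d+2}{2}$ unknowns over $K$. Its coefficients $f_1(z)^if_2(z)^j$ have Weil height $\leq d\cdot h(F(z))\leq CdN^\rho$. Clearing denominators and applying Siegel's lemma over $\cO_K$ (the unknowns outnumber the equations by a factor of two) furnishes a nonzero integral solution with $\log\max|a_{ij}|\leq c_1 dN^\rho$. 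By the Zariski-density assumption, $G(z):=P(f_1(z),f_2(z))$ is not identically zero, and it is plainly entire of order $\leq\rho$.

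Next choose $M$ with $N\leq M\leq\eta N^{\lambda/2}$, where $\eta>0$ is a small fixed constant; since $\lambda>2$, such $M$ exist and $M\to\infty$ as $N\to\infty$. Set $R:=10CM$. For any $z_0\in S_M$, the classical Schwarz lemma applied to the factorization $G(z)=H(z)\prod_{w\in S_N}(z-w)$ on $\{|z|\leq R\}$, combined with the order-$\rho$ bound $\|G\|_R\leq\exp(c_2 dM^\rho)$ (which absorbs the Siegel bound on $|a_{ij}|$), yields
\begin{equation*}
|G(z_0)|\;\leq\;\Bigl(\tfrac{|z_0|+CN}{R-CN}\Bigr)^{|S_N|}\|G\|_R\;\leq\;\exp\bigl(c_2 dM^\rho-c_3 N^{\lambda\rho}\bigr).
\end{equation*}
On the other hand $G(z_0)\in K$ has logarithmic height $\leq c_4 dM^\rho$, so by Liouville's inequality either $G(z_0)=0$ or $|G(z_0)|\geq\exp(-c_5 dM^\rho)$. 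Since $d\asymp N^{\lambda\rho/2}$, the strict inequality $(c_2+c_5)dM^\rho<c_3 N^{\lambda\rho}$ is equivalent to $M^\rho<c_6 N^{\lambda\rho/2}$ for a suitable constant $c_6$, and this holds by our choice of $M$. Hence $G(z_0)=0$ for every $z_0\in S_M$.

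Consequently the nonzero entire function $G$ of order $\leq\rho$ has at least $|S_M|\sim M^{\lambda\rho}$ zeros inside the disk of radius $CM$. Since $\lambda>1$, Jensen's formula gives at most $O_\epsilon((CM)^{\rho+\epsilon})$ zeros there, a contradiction once $N$ (hence $M$) is large. Therefore the Zariski-density hypothesis fails, and the image of $F$ is contained in the plane curve $\{P=0\}$, which is defined over $K$. The main obstacle is the triple balancing carried out in the third paragraph: $d$ must be small enough that Liouville's lower bound does not swamp the Schwarz upper bound, yet large enough to accommodate $|S_N|$ vanishing conditions, all while leaving room to push the vanishing out to a radius $CM$ producing a Jensen contradiction. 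The hypothesis $\lambda>2$ is precisely what keeps the window $N\leq M\leq\eta N^{\lambda/2}$ nonempty for large $N$, and thereby makes the whole argument close.
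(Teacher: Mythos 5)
The paper does not prove this statement: Theorem \ref{lang} is quoted from Lang's textbook on transcendental numbers (Thm.\ 2, p.\ 12 of \cite{La1}) purely as motivation, and the paper's own contribution in this direction is Theorem \ref{poly bound thm}, which is a different (and more quantitative) statement. So your argument can only be judged on its own terms.

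Your overall scheme is the right Schneider--Lang line: Siegel's lemma to build an integral $P$ with $G:=P\circ F\not\equiv 0$ vanishing on $S_N$ (with $d\asymp N^{\lambda\rho/2}$ and $\log\max|a_{ij}|\ll dN^\rho$), then a Schwarz--Liouville balance to propagate the vanishing to $S_M$ for $N\le M\le\eta N^{\lambda/2}$. These steps are correct, and you have correctly located the role of $\lambda>2$: it is what makes $N^{\lambda/2}\gg N$ and keeps the window nonempty.

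The gap is in the last step. When you say ``Jensen's formula gives at most $O_\epsilon((CM)^{\rho+\epsilon})$ zeros'' for the order-$\rho$ function $G$, the implied constant depends on $G$, hence on $N$, and you have not controlled it. Concretely, the usable form of the bound is $n_G(CM)\le\bigl(\log M_G(2CM)-\log|c_G|\bigr)/\log 2$, where $c_G$ is $G(0)$ (or, if $G(0)=0$, the leading Taylor coefficient of $G$ at $0$). You do control $\log M_G(2CM)\ll dM^{\rho+\epsilon}$, and with $d\asymp N^{\lambda\rho/2}$ and $M\lesssim N^{\lambda/2}$ this is indeed beaten by $|S_M|\sim M^{\lambda\rho}$ when $\lambda>2$. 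But $-\log|c_G|$ is genuinely unbounded: $G(0)=P(F(0))$ with $F(0)$ an arbitrary (possibly transcendental) point, Siegel gives only an \emph{upper} bound on the $a_{ij}$, and nothing prevents $|c_G|$ from being doubly exponentially small in $d$. As written, the inequality $M^{\lambda\rho}>n_G(CM)$ is therefore not forced, and the contradiction does not close.

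Either of two standard repairs would fix this. (i) Iterate: once $G$ vanishes on $S_{M_1}$ with $M_1\asymp N^{\lambda/2}$, rerun the Schwarz--Liouville step factoring out those $\sim N^{\lambda^2\rho/2}$ zeros; since $\lambda>2$ gives $\lambda(\lambda-1)/2>\lambda/2$, the admissible window for $M$ strictly grows, and by induction the \emph{fixed} nonzero function $G$ vanishes on $S_n$ for every $n\ge N$. Now the Jensen constant depends only on this fixed $G$, and $n^{\lambda\rho}\gg n^{\rho+\epsilon}$ as $n\to\infty$ (using only $\lambda>1$) gives the contradiction cleanly. (ii) Do not centre Jensen at $0$: pick $w\in S_{M'}$ (for some $M'>M$) with $G(w)\neq 0$ (if no such $w$ exists for any $M'$ you are back in case (i)); then $G(w)\in K$ has bounded height and Liouville supplies $-\log|G(w)|\ll dM'^\rho$, of the same order as $\log M_G$, and the count closes. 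You need one of these two ingredients to complete the proof.
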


More recently, E. Bombieri and J. Pila in the paper \cite{BOMBIERIPILA} proved a Theorem which estimate the number of points of bounded height on a relatively compact, Zariski dense Riemann surface in a projective variety defined over a number field. We propose here a version of their Theorem, stated and proved in \cite{gas1}: 

\begin{theorem}\label{BPil} (Bombieri--Pila) Let $M$ be a Riemann surface and $U\subset M$ a relatively compact open set. Let $X$ be a projective variety of dimension $d>1$ defined over a number field $K$ and $f:M\to X(\C)$ be an holomorphic map with Zariski dense image. Let $\epsilon >0$ and, for every positive number $H$ denote by $C_f(H)$ the cardinality of the $z\in U$ such that $f(z)\in X(K)$ and $h(f(z))\leq H$, where $h(\cdot)$ is a logarithmic Weil height on $X$ associated to an ample line bundle. Then
\begin{equation}
C_f(H)\ll_\epsilon \exp(\epsilon\cdot H).
\end{equation}
\end{theorem}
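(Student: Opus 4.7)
The plan is to adapt the classical Bombieri--Pila determinant method to the setting of a holomorphic map from a Riemann surface into a projective variety $X$ of dimension $d>1$, combining an analytic/arithmetic dichotomy on small disks with the Zariski density hypothesis on $f$. Observe that the target bound $\exp(\epsilon H)$ is precisely the logarithmic analogue of the classical $T^\epsilon$ bound under the change of variables $T=e^H$; the hope is therefore that a direct generalization of Bombieri--Pila will deliver it.

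First I would reduce to a small disk: since $U$ is relatively compact in $M$ and $f$ is holomorphic, a finite covering of $\bar U$ by coordinate disks on which $f$ admits uniform analytic bounds reduces the problem to counting $K$-rational points of height $\leq H$ in $f(\Delta)$ for a fixed disk $\Delta \subset M$ of small radius $r$ centered at some $z_0$. Then I set up the determinant: fix an ample line bundle $L$ on $X$ realizing $h$ and an integer $D\geq 1$, and set $N_D := \dim H^0(X, L^{\otimes D})$, which grows like $c\cdot D^d$. Given $N_D$ distinct points $z_1,\dots,z_{N_D}\in\Delta$ with $P_i := f(z_i)\in X(K)$ and $h(P_i)\leq H$, I consider the matrix $\bigl(s_j(P_i)\bigr)_{i,j}$, where $s_1,\dots,s_{N_D}$ is a basis of $H^0(X, L^{\otimes D})$ arranged so that the pullbacks $s_j\circ f$ vanish at $z_0$ to increasing orders $0,1,\dots,N_D-1$. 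Two competing estimates for its determinant follow: analytically, Taylor expansion at $z_0$ combined with a Vandermonde factorization gives $|\det|\leq C_1^{N_D}\, r^{N_D(N_D-1)/2}$; arithmetically, since the entries are algebraic numbers in $K$ of height at most $\exp(O(DH))$, the product formula over $K$ forces $|\det|\geq \exp(-c_2\,N_D\, D\, H)$ whenever it is non-zero.

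For $r$ chosen small enough relative to $D$ and $H$, the analytic bound wins, the determinant vanishes, and the $N_D$ points all lie on a common divisor $Z\in |L^{\otimes D}|$. The Zariski density of $f$ then guarantees that $f^{-1}(Z)$ is a proper analytic subset of $M$, hence discrete and finite in $\bar U$; restricting $f$ to each irreducible component of $Z$ reduces the problem to an ambient variety of strictly smaller dimension, on which I induct. Choosing $D$ growing like $1/\epsilon$ and balancing $r$ against $H$ while tracking the bounded number of disks covering $U$, I extract the advertised $C_f(H) \ll_\epsilon \exp(\epsilon H)$.

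The main obstacle will be the quantitative dimensional induction: each iteration introduces constants depending on the geometry of the intermediate divisors, and keeping those constants uniform so that the final estimate depends only on $\epsilon$ — and not on the particular algebraic relations that arise — requires careful bookkeeping of how the heights of sections of $L^{\otimes D}$ restrict to lower-dimensional subvarieties. The Zariski density hypothesis is exactly what makes the recursion terminate and what ensures, at every stage, that the determinant method can be applied without $f(M)$ being already confined to a proper subvariety.
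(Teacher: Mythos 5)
Your plan diverges from the paper's, and it contains a real gap in the final step. The paper (and its reference \cite{gas1}, where Theorem \ref{BPil} is actually proved) does not set up the Vandermonde/Wronskian determinant; it instead covers $U$ by small disks, uses the arithmetic Siegel Lemma \ref{siegel} to produce a single integral section $s\in H^0(X,L^{\otimes D})$ of small norm vanishing on a near-maximal subset of the rational points in one disk, and then invokes the Liouville inequality together with a Schwarz-lemma/First-Main-Theorem estimate to force $f^\ast(s)$ to vanish at \emph{every} rational point of bounded height in the disk. The last piece is a zeros-counting input --- in the present paper's notation Proposition \ref{zeros}, which bounds the number of zeros of $f^\ast(s)$ in a compact set by $O(D)$ uniformly in $s$ --- which multiplied by the number of covering disks gives the $\exp(\epsilon H)$ bound. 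Your determinant method is a perfectly serviceable substitute for the Siegel Lemma step: the analytic Vandermonde bound against the arithmetic height bound plays the same role as ``construct an auxiliary section and apply Liouville.'' These are two standard realizations of the same auxiliary-polynomial idea.

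The gap is in your closing paragraph. Once all rational points of height $\le H$ in the small disk lie on a divisor $Z\in|L^{\otimes D}|$, you propose to ``restrict $f$ to each irreducible component of $Z$'' and run a dimensional induction on the ambient variety. This does not make sense: since $f$ has Zariski-dense image, $f$ does not map any component of $M$ into $Z$, so there is no holomorphic map from a Riemann surface into $Z$ to which the inductive hypothesis could apply. The preimage $f^{-1}(Z)\cap U$ is a \emph{finite set}, not a curve. What is needed instead is a uniform quantitative bound on the cardinality of $f^{-1}(Z)\cap U$, that is, on the number of zeros of $f^\ast(s)$ in the relatively compact set $U$, uniformly over all nonzero $s\in H^0(X,L^{\otimes D})$. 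This is obtained by compactness of $\overline U$ plus a Jensen-type / First Main Theorem argument (exactly the content of Proposition \ref{zeros}), giving a bound of the shape $C(U,f)\cdot D$ independent of $s$; only then does multiplying by the number $\asymp \exp(\epsilon H)$ of covering disks (with $D\asymp \epsilon^{-1/(d-1)}$ fixed) produce $C_f(H)\ll_\epsilon \exp(\epsilon H)$. Without that uniform zeros-counting lemma, the argument does not close, and no induction on dimension can replace it because there is no lower-dimensional instance of the hypotheses to induct on.
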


These two theorems have some similarities:
\begin{itemize}
\item[--] Both theorems deal with rational points of analytic Riemann surfaces contained in  varieties defined over number fields.
\item[--] In the hypothesis of both theorem there is a condition on the height of the rational points contained in the Riemann surfaces.
\end{itemize}
But we also notice some big differences:
\begin{itemize}
\item[--] Theorem \ref{lang} deals with rational points on a Riemann surface which is not relatively compact while Theorem \ref{BPil} deals with a relatively compact one.
\item[--] Theorem  \ref{lang} is stated as a algebraization criterion in terms of the number of rational points on the image of an holomorphic function, while Theorem \ref{BPil} is stated as an estimate of the number of rational points contained in a Riemann surface.
\end{itemize}

The objective of this paper is to provide a unified approach to these theorems, demonstrating how they can be considered as parts of the same  theory.

In a optimal generalization of both theorems, we would deal with holomorphic maps from a non compact Riemann Surface to a projective variety and estimate of the number of rational points of bounded height in the image. But at the moment, this seems very difficult. 

Even when we deal with holomorphic maps of $\C$ in a projective variety, some difficulties arise: an important tool of the theory is the Schwartz Lemma, or its refined form which is the First Main Theorem of Nevanlinna Theory. We need to apply it to estimate the norm of sections of a line bundle on points of the image of a suitable disk. This estimate is not efficient if the point we are dealing with is near to the border of the disk itself. As explained in Section \ref{basepoints}, the main tool of the First Main Theorem is the characteristic  function of an holomorphic map. This characteristic function depends on the choice of a base point (which is the point where we need to bound the norm of the section). In order to obtain estimates in terms of a fixed characteristic function, we need to compare these when we change the base points. This is easy when the points are very near but less easy when one of the point is near to border of the disk. This problem is studied in Section \ref{basepoints} where a satisfactory solution is proposed in the case when $X$ is affine, cf Proposition \ref{comparing T}. When $X$ is projective, we find a (quite more involved to obtain) solution for points which are, in the spirit of Cartan estimates, outside of a union of disks the sum of the length of radii is very small, cf. Theorem \ref{comparing T1}.

We state now the main results of this paper by starting with the affine versions of these: Let $X$ be a smooth affine  variety defined over a number field $K$ of dimension $n>1$, we suppose that a smooth projective compactification of it is equipped with an arithmetic polarization  $M$ (as explained in Section \ref{basic arakelov}). We fix an embedding $\sigma :K\hookrightarrow \C$ and an analytic map $\varphi : \C\to X_{\sigma}(\C)$. For every $r>0$, denote by $\Delta_r\subset\C$ the disk of radius $r$. 

For every $(r,H)\in\R^2$, denote by $C_\varphi(r,H)$ the cardinality of the set $S_\varphi(r,H):=\{ w\in\Delta_r\;/\; \varphi(w)\in X(K)\;{\rm and}\; h_M(\varphi(w))\leq H\}$.

The first result of this paper is a generalization of the Bombieri--Pila Theorem to this situation: 

\begin{theorem}\label{BPaffine intro}(cf. Theorem \ref{BPaffine}) In the hypothesis above, let $\epsilon >0$, then
\begin{equation}
C_\varphi(r,H)\ll \exp(\epsilon(H+T_{\varphi, M}(1+\epsilon)r))
\end{equation} where $T_{\varphi, M}(r))$ is the Nevanlinna counting function of the analytic map $\varphi$ and  the constants involved in $\ll$ depend only on $X$, $M$, $\epsilon$ and $\varphi$ but they are independent on $H$ and $r$.
\end{theorem}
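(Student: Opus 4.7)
My plan is to run a Siegel-lemma interpolation argument in the style of Bombieri--Pila, replacing the compactness hypothesis of Theorem \ref{BPil} by the growth control supplied by Nevanlinna's First Main Theorem, and using Proposition \ref{comparing T} to transfer the resulting bound (computed at a shifted base point) back to the characteristic $T_{\varphi,M}$ centered at $0$.

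Set $N = C_\varphi(r,H)$ and enumerate the distinct rational points as $\varphi(w_1),\ldots,\varphi(w_N)$ with $w_i \in \Delta_r$. Choose the minimal integer $d$ with $\dim H^0(\bar X, M^{\otimes d}) \ge 2N$, so that $d \asymp N^{1/n}$ where $n = \dim X$. The arithmetic Siegel lemma, applied to the evaluation map at the $\varphi(w_i)$ (whose matrix in a fixed integral basis of $H^0(\bar X, M^{\otimes d})$ has entries of logarithmic height $\le dH + O(d)$), produces a nonzero section $s \in H^0(\bar X, M^{\otimes d})$ defined over $\cO_K$, vanishing at every $\varphi(w_i)$, with sup-norm bounded by $\log\|s\|_{\sup} \le C(dH + d)$. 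Zariski density of $\varphi(\C)$ implies $s \circ \varphi \not\equiv 0$ as an entire function, and it has at least $N$ zeros in $\Delta_r$.

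I then select a base point $w_0 \in \Delta_{(1+\epsilon)r}\setminus\{w_1,\ldots,w_N\}$, lying well inside the enlarged disk, at which $\|s(\varphi(w_0))\|$ is not too small: a mean-value argument on the subharmonic function $\log\|s\circ\varphi\|$ over $\Delta_{(1+\epsilon)r}$ produces such a $w_0$ with $-\log\|s(\varphi(w_0))\| \le C'\bigl(d\,T_{\varphi,M}((1+\epsilon)r) + \log\|s\|_{\sup}\bigr)$. Applying Nevanlinna's First Main Theorem to $s$ along $\varphi$ at base point $w_0$, with radius $R$ chosen so that $\Delta_r \subset \Delta(w_0,R) \subset \Delta_{(1+\epsilon)r}$, and bounding the counting term from below by $N\log(R/\rho)$ where $\rho = \max_i|w_i - w_0|$, gives
\begin{equation*}
N \log(R/\rho) \;\le\; d\, T_{\varphi,M}(R;w_0) \;-\; \log\|s(\varphi(w_0))\| \;+\; \log\|s\|_{\sup} \;+\; O(1).
\end{equation*}
Proposition \ref{comparing T} then replaces $T_{\varphi,M}(R;w_0)$ by $T_{\varphi,M}((1+\epsilon)r;0) + O(1)$, and inserting the Siegel bound on $\|s\|_{\sup}$ together with $d\asymp N^{1/n}$ yields $N \ll \bigl(T_{\varphi,M}((1+\epsilon)r)+H\bigr)^{n/(n-1)}$, which is absorbed by the claimed bound $\exp\bigl(\epsilon(T_{\varphi,M}((1+\epsilon)r)+H)\bigr)$.

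The main obstacle is the choice of $w_0$. It must simultaneously avoid the $w_i$, admit a usable lower bound on $\|s(\varphi(w_0))\|$, and lie in a region where Proposition \ref{comparing T} is efficient, i.e.\ far enough from the boundary of $\Delta_{(1+\epsilon)r}$. Reconciling these constraints is exactly the ``points near the border of the disk'' issue flagged in the introduction; once it is handled by Proposition \ref{comparing T} in the affine setting, the remaining bookkeeping in balancing $d$, $N$, and $R$ is routine.
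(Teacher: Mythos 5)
Your argument, if it worked, would prove the polynomial bound $N\ll(T_{\varphi,M}((1+\epsilon)r)+H)^{n/(n-1)}$. But the paper explicitly points out (Section~\ref{polynomial bounds}, citing Surroca and Boxall--Jones) that the exponential bound in Theorem~\ref{BPaffine} is \emph{optimal}: there exist Zariski-dense entire curves for which $C_\varphi(r,H)$ grows exponentially in $T_{\varphi,M}((1+\epsilon)r)+H$. A polynomial bound for all $(r,H)$ is therefore false, so some step must break.

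The specific gap is your choice of base point $w_0$ via the ``mean-value argument on the subharmonic function $\log\|s\circ\varphi\|$.'' The function $\log\|s\circ\varphi\|$ is \emph{not} subharmonic: writing $s=f\,\sigma$ in a local frame with $\|\sigma\|^2=e^{-\phi}$, one has $\log\|s\circ\varphi\| = \log|f\circ\varphi| - \tfrac12\,\phi\circ\varphi$, and since the metric is positive, $\phi$ is plurisubharmonic, so $-\tfrac12\phi\circ\varphi$ is \emph{super}harmonic. In fact $dd^c\log\|s\circ\varphi\| = [\operatorname{div}(\varphi^\ast s)] - d\,\varphi^\ast c_1(M)$, which is strictly negative away from the zeros. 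Thus no mean-value or maximum-principle argument produces a point $w_0$ with the required lower bound on $\|s(\varphi(w_0))\|$. The only tool in the paper for bounding $\|s\|$ from below at a specific point is the Liouville inequality, which requires $\varphi(w_0)$ to be a rational point where $s$ does \emph{not} vanish --- but your construction makes $s$ vanish at \emph{every} rational preimage $w_1,\dots,w_N$, so no such $w_0$ exists. Without control on $-\log\|s\|(\varphi(w_0))$, the First Main Theorem cannot be converted into an upper bound on $N$, and indeed in the Surroca/Boxall--Jones examples this term is exponentially large at every point of $\Delta_r$.

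The paper circumvents this by a local, cell-by-cell argument (Lemmas~\ref{small w affine} and~\ref{covering}) rather than a single global interpolation. The degree $d$ is \emph{fixed} (depending only on $\epsilon$), not taken proportional to $N^{1/n}$. The disk $\Delta_r$ is covered by $\asymp\exp(2(T+H)/d^{n-1})$ hyperbolic balls of hyperbolic diameter $\leq\exp(-(T+H)/d^{n-1})$; on each ball, a Siegel-lemma section of fixed degree $d$ is made to vanish at a subset of $\asymp d^n$ rational preimages, and one argues by contradiction --- using Liouville at a hypothetical \emph{unvanished} rational preimage inside the same ball, together with the key fact that within such a ball the Green's function at the other zeros is $\geq (T+H)/d^{n-1}$, which is very large, not merely a constant $c(\epsilon)$ --- that $s$ in fact vanishes at \emph{all} rational preimages in the ball. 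Proposition~\ref{zeros} then bounds the number of zeros per ball by $O(T_{\varphi,M}((1+\epsilon)r))$, and multiplying by the covering number gives the exponential bound. The crucial structural difference is that in your approach the Green's function contributions are only $\geq c(\epsilon)$ (since the $w_i$ spread over all of $\Delta_r$), which is far too weak; the paper's localization makes the Green's function contributions large enough to get a contradiction even for fixed $d$.
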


One should notice that, if we fix $r$, we find the Bombieri--Pila Theorem \ref{BPil}.

There are examples which show that Theorem \ref{BPaffine intro}  is optimal, in the sense that we cannot hope that, for general $\varphi$, the growing of $C_\varphi(r,H)$ is less than exponential. Nevertheless, in section \ref{polynomial bounds} we show that for many values of $(r,H)$ the number $C_\varphi(r,H)$ is polynomially bounded. More precisely, if $\epsilon>0$ and $\gamma>0$,we will denote by $L(\varphi, \epsilon, \gamma)\subset\R^2$ the following set
 \begin{equation}
 L(\varphi, \epsilon):=\left\{ (T_{\varphi, M}(r),H)\in\R^2\;\;/\;\; C_\varphi(r,H)\leq \epsilon\cdot(T_{\varphi,M}((1+\epsilon)r)+H)^\gamma\right\}.
 \end{equation}
 We will prove then 
 
 \begin{theorem}\label{poly bound thm intro}(Cf. Theorem \ref{poly bound thm})  If $\gamma>{{n}\over{n-1}}$, then, for every $\epsilon >0$,  the set $L(\varphi, \epsilon)$ contains infinitely many disks of arbitrarily big radius.
 \end{theorem}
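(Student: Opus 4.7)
The plan is to combine an arithmetic Siegel-lemma construction with Nevanlinna's First Main Theorem, using the base-point comparison Proposition \ref{comparing T}, to establish the pointwise estimate
\begin{equation}
C_\varphi(r,H)\ll \bigl(T_{\varphi,M}((1+\epsilon)r)+H\bigr)^{n/(n-1)}
\end{equation}
valid as soon as $T_{\varphi,M}((1+\epsilon)r)+H$ is sufficiently large, and then use the strict gap $\gamma>n/(n-1)$ to force $L(\varphi,\epsilon)$ to contain every sufficiently large $(T,H)\in\R^2$, which trivially produces infinitely many disks of every prescribed radius.

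Fix $(r,H)$ and put $N:=C_\varphi(r,H)$. The first step is to choose the smallest integer $d$ with $D_d:=\dim H^0(\bar X,M^{\otimes d})\geq 2N$, where $\bar X$ denotes the given smooth projective compactification of $X$; by Riemann--Roch, $d\sim (n!\,N)^{1/n}$. The $N$ rational points $\varphi(w)$, $w\in S_\varphi(r,H)$, have height $\leq H$ and impose at most $N<D_d/2$ linear conditions on $H^0(\bar X,M^{\otimes d})$. An application of the arithmetic Siegel lemma then produces a non-zero section $s\in H^0(\bar X,M^{\otimes d})$, defined over $K$, vanishing at every $\varphi(w)$, with Arakelov height $h_{\mathrm{Ar}}(s)\ll dH+d\log d$. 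Since $\varphi$ has Zariski dense image, $s\circ\varphi\not\equiv 0$, so its zero divisor in $\C$ contains each $w\in S_\varphi(r,H)$ as a zero.

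The second step is to apply the First Main Theorem of Nevanlinna theory to $s\circ\varphi$ and to invoke Proposition \ref{comparing T} in order to compare uniformly the characteristic functions at the various base points $w\in S_\varphi(r,H)$ scattered throughout $\Delta_r$ --- in particular those close to $\partial\Delta_r$ --- with the characteristic based at $0$. This yields
\begin{equation}
N\log(1+\epsilon)\;\leq\;N_{s\circ\varphi}\bigl((1+\epsilon)r\bigr)\;\leq\;d\cdot T_{\varphi,M}\bigl((1+\epsilon)r\bigr)+h_{\mathrm{Ar}}(s)+O(1),
\end{equation}
where the lower bound uses only that all $N$ zeros lie in $\Delta_r$. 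Substituting $d\sim N^{1/n}$ and $h_{\mathrm{Ar}}(s)\ll dH$ and solving elementarily produces the pointwise estimate above.

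From this pointwise bound, the strict inequality $\gamma>n/(n-1)$ ensures
\[
C_\varphi(r,H)\leq \epsilon\bigl(T_{\varphi,M}((1+\epsilon)r)+H\bigr)^\gamma
\]
as soon as $T_{\varphi,M}((1+\epsilon)r)+H$ exceeds a threshold $T_0=T_0(\epsilon,\varphi,X,M)$; consequently $L(\varphi,\epsilon)$ contains every $(T,H)\in\R^2$ with $T+H\geq T_0$, which in particular contains infinitely many disks of every prescribed radius. The main obstacle lies in the second step: without the uniform base-point comparison from Section \ref{basepoints}, one cannot apply the First Main Theorem effectively at preimages $w$ lying near the boundary of $\Delta_r$, and the error term thus introduced would destroy the critical exponent $n/(n-1)$.
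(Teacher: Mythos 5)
Your strategy hinges on establishing the uniform pointwise bound $C_\varphi(r,H)\ll(T_{\varphi,M}((1+\epsilon)r)+H)^{n/(n-1)}$ for all sufficiently large $(r,H)$, and then observing that this trivially fills out $L(\varphi,\epsilon)$ once $T+H$ is large. Such a pointwise polynomial bound is false: the paper records at the start of Section \ref{polynomial bounds}, citing Surroca and Boxall--Jones, that the exponential bound of Theorem \ref{BPaffine} is optimal, which is exactly why Theorem \ref{poly bound thm intro} is stated as the weaker assertion about infinitely many disks rather than an eventual pointwise inequality. The concrete gap is the ``$O(1)$'' in your second-step inequality. After Siegel produces $s\in H^0(X,M^d)$ vanishing at every point of $S_\varphi(r,H)$ with $d\sim C_\varphi(r,H)^{1/n}$, the First Main Theorem at a base point $w_0$ with $s\circ\varphi(w_0)\neq 0$ carries the term $-\log\|s\circ\varphi\|(w_0)$, and there is no choice of $w_0$ making this $O(1)$ or even $O(dH)$: any rational $w_0$ of height $\leq H$ in $\Delta_r$ is a zero of $s$ by construction, so Liouville is unavailable there; and if $w_0$ is chosen generically in a fixed small disk $\Delta_{r_0}$ (as in the proof of Proposition \ref{zeros}), the only lower bound on $\|s\circ\varphi\|(w_0)$ comes from the norm-comparison constant $B_1(d)$ in $B_1(d)\|s\|_{\infty,X}\leq\sup_{\Delta_{r_0}}\|s\circ\varphi\|$. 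Since $\varphi(\Delta_{r_0})$ is a transcendental analytic curve and hence pluripolar in the compactification of $X$, there is no linear-in-$d$ control of $-\log B_1(d)$: one can in fact choose a nonzero $s\in H^0(X,M^d)$ whose pullback vanishes to order comparable to $h^0(X,M^d)\sim d^n$ at $0$, and the Cauchy estimate then gives $-\log B_1(d)\gtrsim d^n$, collapsing your inequality to the vacuous $N\ll N+N^{1/n}T$. Proposition \ref{zeros} only asserts constants depending on $d$, and the paper is careful to invoke it exclusively with $d$ \emph{fixed}; your argument requires $d\to\infty$ with $N$, which is precisely where control is lost.

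The paper's actual proof is structured to sidestep this entirely: it argues by contradiction on an $(A+1)$-subgeometric sequence $(r_n,H_n)$ in the complement of $L(\varphi,\epsilon)$, builds a \emph{single} section $s$ of a \emph{single} fixed degree $d_1$ (determined by $(r_1,H_1)$ alone) vanishing on a large subset of $S_\varphi(r_1,H_1)$, and then shows by induction --- using Liouville at the rational points $w\in S_\varphi(r_n,H_n)$, Theorem \ref{generalFMT}, and the subgeometric comparison between consecutive levels --- that $\varphi^\ast(s)$ must vanish on every $S_\varphi(r_n,H_n)$. For this fixed $d_1$ the number of zeros of $\varphi^\ast(s)$ in $\Delta_{r_n}$ grows at most linearly in $T_{\varphi,M}((1+\epsilon)r_n)$ by Proposition \ref{zeros}, while hypothesis (c) on the sequence forces growth like $(T_n+H_n)^\gamma$ with $\gamma>1$; the resulting contradiction shows no such sequence exists, i.e.\ $L(\varphi,\epsilon)$ is $A$-wide. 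The subgeometric hypothesis is what keeps Liouville at level $n$ comparable to the Siegel bound at level $1$, and fixing $d_1$ is what keeps Proposition \ref{zeros} usable. You would need to recover some version of this non-accumulation structure; the uniform pointwise route cannot work.
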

 
 Actually Theorem \ref{poly bound thm} is more precise than the statement above. We refer to section  \ref{polynomial bounds} for details. 

As explained before, when we consider an entire Zariski dense curve in a projective variety, the situation is more complicated to handle.  
Let $X$ be a smooth projective variety defined over a number field $K$ of dimension $n>1$,  equipped with an arithmetic polarization  $M$ (as explained in Section \ref{basic arakelov}). We fix an embedding $\sigma :K\hookrightarrow \C$ and an analytic map $\varphi : \C\to X_{\sigma}(\C)$. For every $(r,H)\in\R^2$, denote by $C_\varphi(r,H)$ the cardinality of the set $S_\varphi(r,H):=\{ w\in\Delta_r\setminus E_r\;/\; \varphi(w)\in X(K)\;{\rm and}\; h_M(\varphi(w))\leq H\}$ where $E_r\subset\Delta_r$  is an exceptional set,which depends only on $\varphi$,  which is union of disks the sum of whose radii is less than ${{5}\over{T_{\varphi, M}(r)}}$.

The main theorem that we can prove in this case is the following:

\begin{theorem}\label{BPproj intro}(Cf. Theorem \ref{BPprojective}) In the hypothesis above, let $\epsilon>0$, then
\begin{equation}
C_\varphi(r,H)\ll T_{\varphi, M}((2+\epsilon)r)\cdot\exp(\epsilon(H+\log(T(r))\cdot T_{\varphi, M}(er)))
\end{equation} 
where the constants involved in $\ll$ depend only on $X$, $M$, $\epsilon$ and $\varphi$ but they are independent on $H$ and $r$.
\end{theorem}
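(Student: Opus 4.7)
The plan is to adapt the Bombieri--Pila determinant method---already employed for the affine Theorem \ref{BPaffine intro}---to the projective setting. The crucial change is that the clean comparison of Nevanlinna characteristic functions at different base points (Proposition \ref{comparing T}) must be replaced by its projective analogue Theorem \ref{comparing T1}, whose restricted range of validity both forces the exclusion of the set $E_r$ and produces the additional $\log T(r)$ factor appearing in the exponent.

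First I would fix $\epsilon > 0$ and an integer $N$ (to be optimized as roughly $N \sim H/T_{\varphi,M}(r)$). Consider the arithmetic lattice $V_N := H^0(\mathcal{X}, M^{\otimes N})$ on an integral model $\mathcal{X}$ of $X$, of rank $D_N \asymp N^n/n!$, with a basis $s_1,\dots,s_{D_N}$ of controlled arithmetic height supplied by Siegel's lemma. Given a block of $D_N$ points $w_1,\dots,w_{D_N} \in S_\varphi(r,H)\subset \Delta_r\setminus E_r$, locally trivialize $M$ along $\varphi$ and form
\begin{equation*}
\Delta := \det\bigl(s_i(\varphi(w_j))\bigr)_{i,j=1}^{D_N} \in K.
\end{equation*}
Since each entry has logarithmic height $\leq NH + O(\log N)$, the product formula applied to the algebraic number $\Delta$ gives the Liouville-type lower bound $\log|\Delta|_\sigma \geq -[K:\Q]\,D_N\,(NH + O(\log N))$ whenever $\Delta \neq 0$. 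On the analytic side, choose a base point $w_0 \in \Delta_r \setminus E_r$; Nevanlinna's First Main Theorem, combined with Theorem \ref{comparing T1} to trade the base-point-dependent characteristic $T_{\varphi,M,w_0}$ for $T_{\varphi,M}((2+\epsilon)r)$ at the cost of an error $O(\log T_{\varphi,M}(er))$, yields the matching upper bound
\begin{equation*}
\log|\Delta|_\sigma \;\leq\; D_N \cdot N \cdot T_{\varphi,M}((2+\epsilon)r) + O\bigl(D_N \log T_{\varphi,M}(er)\bigr).
\end{equation*}

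Comparing the two inequalities forces $\Delta = 0$ as soon as $C_\varphi(r,H)$ exceeds the bound claimed in the theorem. Vanishing of $\Delta$ confines the $\varphi(w_j)$ of each block to a divisor $Y \in |M^{\otimes N}|$; since $\varphi$ has Zariski dense image, the pullback $\varphi^* Y$ is effective of counting function at most $N\,T_{\varphi,M}(r)+O(1)$, so at most $O(N\,T_{\varphi,M}((2+\epsilon)r))$ points of $\Delta_r$ can map into $Y$. A descending induction on the dimension of the ambient variety, restricting $M$ to $Y$ at each step, together with summation over blocks and optimal choice of $N$, then yields the stated estimate: the polynomial prefactor $T_{\varphi,M}((2+\epsilon)r)$ arises from the inductive counting of preimages of divisors inside $\Delta_r$, while the factor $\log T(r)\cdot T_{\varphi,M}(er)$ inside the exponential records the accumulated Cartan-type error of Theorem \ref{comparing T1}.

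The main obstacle is the uniform handling of Theorem \ref{comparing T1} throughout the induction. One must verify that a single exceptional set $E_r$ (union of disks with total radii $\leq 5/T_{\varphi,M}(r)$) suffices uniformly over all base points arising in the descent, that the logarithmic error from changing base points remains additive---and not multiplicative---in the exponent, and that the Cartan-type holes do not accumulate after intersection with the descending subvarieties. All remaining ingredients (Siegel's lemma, the product formula, the First Main Theorem, and the Zariski-density induction on dimension) run in parallel to the affine proof of Theorem \ref{BPaffine intro}.
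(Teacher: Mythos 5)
Your proposal runs the classical Bombieri--Pila \emph{determinant} method (exhibit a $D_N \times D_N$ interpolation determinant, compare a Liouville/product-formula lower bound with an analytic upper bound, and conclude vanishing), whereas the paper's proof is the dual Siegel-lemma argument localized to small hyperbolic cells: $\Delta_r$ is covered by $N \gg \exp(\epsilon(H+\log(T(r))T_{\varphi,M}(er)))$ sets $W_i$ of small hyperbolic diameter $\mathrm{diam}_{r_1}(W_i) \leq \exp(-(\log T(r) T_{\varphi,M}(er)+H)/d^{n-1})$ (Lemma \ref{covering}); for each $W_i$, Siegel's Lemma \ref{siegel} produces a section of $M^d$ of controlled norm vanishing on a large subset of $S_\varphi(r,H)\cap W_i$, and then the projective First Main Theorem \ref{generalFMT1} plus Liouville forces vanishing on \emph{all} of $S_\varphi(r,H)\cap W_i$ (Lemma \ref{small w proj}); finally Proposition \ref{zeros} (a consequence of Zariski density) bounds the number of zeros of each section by $\ll T_{\varphi,M}((1+\epsilon)r)$, and one multiplies by $N$. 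There is no descending induction on dimension: the entire curve $\varphi$ already furnishes the one-dimensional object on which zeros are counted.

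The determinant route is legitimate in principle, but your execution has a genuine gap. The analytic upper bound you write,
\begin{equation*}
\log|\Delta|_\sigma \;\leq\; D_N N\, T_{\varphi,M}((2+\epsilon)r) + O\bigl(D_N \log T_{\varphi,M}(er)\bigr),
\end{equation*}
contains no Schwarz-lemma gain at all: it is essentially a trivial size bound, and comparing it with the Liouville lower bound $\log|\Delta|_\sigma \geq -[K:\Q]D_N(NH+O(\log N))$ produces the vacuous inequality $-H \leq T$, never a contradiction. The whole force of the determinant method comes from the term $\sim D_N^2 \log(\mathrm{diam})$ that appears precisely because the $D_N$ interpolation points are confined to a region of small hyperbolic diameter, and that is the term you need Theorem \ref{comparing T1} to control (paying the multiplicative, not additive, cost $\log T_{\varphi,M}(r)\cdot T_{\varphi,M}(er)$; your display has $O(D_N\log T)$ additively, which is also off). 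You must therefore first cover $\Delta_r \setminus E_r$ by hyperbolically small cells exactly as in Lemma \ref{covering}, and carry the smallness through the determinant estimate; without this the method does not close. Relatedly, the ``descending induction on the dimension of the ambient variety'' is superfluous here and muddles the conclusion: once a single nonzero $s\in H^0(X,M^d)$ annihilates a cell, Zariski density of $\varphi(\C)$ and the Nevanlinna counting function (Proposition \ref{zeros}) already cap the number of its preimages in $\Delta_r$; one does not restrict $\varphi$ to $Y$ and iterate. Finally, a base-point choice $w_0\in\Delta_r\setminus E_r$ for the FMT must be made inside each cell, and you should check that the single Cartan exceptional set $E_r$ of Theorem \ref{comparing T1} indeed works uniformly for all these $w_0$ -- which it does, because Theorem \ref{comparing T1} produces one $E_r$ serving every $w_0\notin E_r$, not one per base point; you correctly flag this as the obstacle, and the paper resolves it precisely by stating Theorem \ref{comparing T1} in that uniform form.
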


Unfortunately, the most frustrating part of this statement is the presence of the exceptional set $E_r$ which, in principle may contain the preimage of every rational point.  We hope that, in a future, a more advanced study of the problem proposed in section \ref{basepoints} may give an improvement of Theorem \ref{BPprojective}. 

A statement similar to Theorem \ref{poly bound thm intro} can be obtained and proved following the same lines of the proof of it (and it will have polynomial bounds in terms of the characteristic function similar to Theorem \ref{BPproj intro}). We leave to the interested reader the statement and the proof of it. 

As we can see, Theorems \ref{BPaffine intro} and \ref{BPproj intro} are non relatively compact versions of Theorem \ref{BPil} and Theorem \ref{poly bound thm intro} is a generalization of Theorem \ref{lang} under the hypothesis of Theorem \ref{BPaffine intro}. We decided to keep the language as geometrical as possible because we think that these Theorems may have potential applications to diophantine questions. For instance, in the paper \cite{CW} the authors prove that, given a countable subset  $B$ of a rationally connected projective variety $X$, we can construct an entire curve in $X$ with order of growth zero and containing it. In principle we can apply theorems of this paper to the subset of rational points of a rational connected variety but in order to obtain interesting results, we should be able to prove that points of  height bounded by $H$  are contained in the image of a disk with radius an explicit function of $H$. We hope to be able to come back to these possible applications in a future paper. 

\

\section{Notations and basic facts from Arithmetic geometry}\label{basic arakelov}

\

This section serves as a reference for the arithmetic part of the article. Not all the notations and tools introduced in this section will be explicitly used in the article, but we present them here for completeness and to allow the reader to verify, each time, that the introduced constants depend only on the specified parameters.

Let $K$ be a number field and $O_K$ be its ring of integers. We will denote by $M_K^\infty$ the set of infinite places of $K$. We fix a place $\sigma_0\in M_K^\infty$. 

Let $X_K$ be a projective variety of dimension $N$ defined over $K$

If $\tau\in M_K^\infty$  and $F$ is an object over $X_K$ ($F$ may be a sheaf, a divisor, a cycle...), we will denote by $X_\tau$ the complex variety $X_K\otimes_\tau\C$ and by $F_\tau$ the restriction of $F$ to $X_\tau$. 

A model $\cX\to\Spec(O_K)$ of $X_K$ is a  flat projective $O_K$ scheme whose generic fiber is isomorphic to $X_K$. Suppose that $L_K$  and $\cX$ are  respectively a line bundle over $X_K$ and a model of it; We will say that a line bundle $\cL$ over $\cX$ is a model of $L_K$ if its restriction to the generic fiber is isomorphic to $L_K$. 

Let $\cX$ be a model of $X_K$. A hermitian line bundle $\overline\cL$ is a couple $(\cL, \langle\cdot;\cdot\rangle_\sigma)_{\sigma\in M_K^\infty}$, where $\cL$ is a line bundle over $\cX$ and, for every $\sigma\in M_K^\infty$, $\langle\cdot;\cdot\rangle_\sigma$ is a continuous  hermitian product on $L_\sigma$, with the condition that, if $\sigma=\overline\tau$, then $\langle\cdot;\cdot\rangle_\sigma=\overline{\langle\cdot;\cdot\rangle}_\tau$. An hermitian vector bundle is defined similarly. 

If $s$ is a local section of $L_\sigma$ defined in a neighborhood of a point $z\in X_\sigma(\C)$, then we will denote by $\Vert s\Vert_\sigma$ the real number $\langle s,s\rangle_\sigma^{1/2}$.

If $X_K$ is a projective variety, it is easy to see that for every line  bundle $L_K$  on $X_K$, we can find an embedding $\iota : X_K\hookrightarrow P_K$,  where $P_K$ is a smooth projective variety and $L=\iota^\ast(M)$ with $M$ line bundle on $P_K$. A metric on $L_K$ is said to be smooth if it is the restriction of a smooth metric on $M$.

Let $(\cL, \langle\cdot;\cdot\rangle_\sigma)_{\sigma\in M_K^\infty}$ be a hermitian line bundle on a model $\cX$  of $X_K$. If $s\in H^0(X_K, L^d_K)$ is a non zero section, we will denote by $\log\Vert s\Vert$ the real number $\sup_{\tau\in M_K^\infty}\{ \log\Vert s_\tau\Vert_\tau\}$. 

\smallskip

The $\sup$ norm and the $L^2$ norms are comparable, thus, in each situation, we will use the more suitable:

\begin{itemize}
\item[--] Let $L$ be a hermitian ample line bundle on a projective variety $Z$ equipped with a smooth metric $\omega$.  Over $H^0(Z, L^d)$ we can define two natural norms:
 \begin{equation}
 \Vert s\Vert_{\sup}:=\sup_{z\in Z}\{\Vert s\Vert (z)\}\;\; {\rm and}\;\;\Vert s\Vert_{L^2}:=\sqrt{\int_Z\Vert s\Vert^2 \omega^n}.
 \end{equation}
These norms are comparable: we can find constants $C_i$ such that

\begin{equation}\label{gromov}
C_1\Vert s\Vert_{L^2}\leq \Vert s\Vert_{\sup}\leq C_2^d\Vert s\Vert_{L^2}.
\end{equation}
\end{itemize}

This statement (due to Gromov) is proved for instance in \cite{SABK} Lemma 2 p. 166 when $Z$ is smooth. The general statement can be deduced by taking a resolution of singularities (remark that the proof of \cite{SABK} Lemma 2 p. 166  do not require that $L$ is ample). 

\smallskip

We recall the following standard facts of Arakelov theory:

\begin{itemize}
\item[--] If $L$ is a hermitian line bundle over $\Spec(O_K)$ and $s\in L$ is a non vanishing section, we define
\begin{equation}
\widehat{\deg}(L):=\log(Card(L/sO_K))-\sum_{\sigma_\in M^\infty_K}\log\Vert s\Vert_\sigma.
\end{equation}
\end{itemize}
If $E$ is a hermitian vector bundle of rank $r$ on $\Spec(O_K)$, we define $\widehat{\deg}(E):=\widehat{\deg}(\wedge^{r} E)$ and the slope of $E$ is $\widehat{\mu}(E)={{\widehat{\deg}(E)}\over{r}}$.  
\begin{itemize}
\item[--] Within all the sub bundles of $E$ there is one whose slope is maximal, we denote by $\widehat{\mu}_{\max}(E)$ its slope. It is easy to verify that $\widehat{\mu}_{\max}(E_1\oplus E_2)=\max\{ \widehat{\mu}_{\max}(E_1), \widehat{\mu}_{\max}(E_2)\}$.

\end{itemize}

We will need the following version of the Siegel Lemma: 

\begin{lemma}\label{siegel} (Siegel Lemma) Let $E_1$ and $E_2$ be hermitian vector bundles over $O_K$. Let $f:E_1\to E_2$ be a non injective linear map. Denote by $m=rk(E_1)$ and $n=rk(Ker(f))$. Suppose that there exists a positive real constant $C$ such that:

a) $E_1$ is generated by elements of $\sup$ norm less or equal than $C$.

b) For every infinite place $\sigma$ we have $\Vert f\Vert_\sigma\leq C$ 

Then there exists an non zero element $v\in Ker(f)$ such that
\begin{equation}
\sup_{\sigma\in M^\infty_K}\{ \log\Vert v\Vert_\sigma\}\leq{{m}\over{n}}\log(C^2)+\left( {{m}\over{n}}-1\right) \widehat{\mu}_{\max}(E_2)+3\log(n)+A
\end{equation}
where $A$ is a constant depending only on $K$. 
\end{lemma}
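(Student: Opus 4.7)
The plan is to apply the standard slope-Minkowski strategy for Siegel-type lemmas in Arakelov geometry. Denote by $W := \Ker(f)$ the scheme-theoretic kernel, viewed as a saturated sub-$O_K$-module of $E_1$ of rank $n$ equipped with the induced hermitian metric; this is a hermitian vector bundle over $\Spec(O_K)$. The problem reduces to bounding the slope $\widehat{\mu}(W)$ from below: once such a bound is in hand, an arithmetic Minkowski (or Bombieri--Vaaler) type theorem applied to $W$ produces a non-zero $v \in W$ with
$$\sup_\sigma \log\|v\|_\sigma \leq -\widehat{\mu}(W) + 3\log(n) + A,$$
for a constant $A$ depending only on the number field $K$, which is precisely the shape of the desired conclusion once the slope lower bound is matched with the terms $\tfrac{m}{n}\log(C^2)$ and $(\tfrac{m}{n}-1)\widehat{\mu}_{\max}(E_2)$.

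To estimate $\widehat{\mu}(W)$ I would use the short exact sequence
$$0 \longrightarrow W \longrightarrow E_1 \longrightarrow E_1/W \longrightarrow 0,$$
which, with the induced metric on $W$ and the quotient metric on $E_1/W$, yields the degree additivity $\widehat{\deg}(W) = \widehat{\deg}(E_1) - \widehat{\deg}(E_1/W)$. So it suffices to produce a lower bound for $\widehat{\deg}(E_1)$ from hypothesis (a) and an upper bound for $\widehat{\deg}(E_1/W)$ from hypothesis (b) together with the definition of $\widehat{\mu}_{\max}(E_2)$.

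For the lower bound, hypothesis (a) allows me to pick $m$ elements $v_1,\dots,v_m$ from a generating set of $E_1$ that are $K$-linearly independent, each of sup-norm at most $C$ at every infinite place. Then $v_1 \wedge \cdots \wedge v_m$ is a non-zero section of the rank-one hermitian module $\wedge^m E_1$ of norm at most $C^m$ at every $\sigma$, so $\widehat{\deg}(E_1) = \widehat{\deg}(\wedge^m E_1) \geq -m\log C$ up to a constant depending only on $K$. For the upper bound, $f$ induces an injection $E_1/W \hookrightarrow E_2$; hypothesis (b) combined with the definition of the quotient metric gives $\|\bar x\|_{E_2,\sigma} \leq C\,\|\bar x\|_{\mathrm{quot},\sigma}$ for every $\bar x \in E_1/W$ and every $\sigma$, so the metric induced from $E_2$ is dominated by $C$ times the quotient metric. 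Passing to the determinant line bundle (where the comparison factor becomes $C^{m-n}$) and using that the saturation of $E_1/W$ in $E_2$ has slope at most $\widehat{\mu}_{\max}(E_2)$ yields $\widehat{\deg}(E_1/W,\mathrm{quot}) \leq (m-n)\widehat{\mu}_{\max}(E_2) + (m-n)\log C$, again up to a $K$-dependent constant.

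Combining these two bounds gives
$$\widehat{\mu}(W) = \frac{\widehat{\deg}(W)}{n} \geq -\frac{m}{n}\log(C^2) - \left(\frac{m}{n}-1\right)\widehat{\mu}_{\max}(E_2) - A',$$
and the Minkowski step quoted above then produces the required $v$. The main obstacle I expect is the clean tracking of the metric comparisons: the passage from the quotient metric on $E_1/W$ to the metric induced by $f$ from $E_2$ is where hypothesis (b) truly enters, and one must check that the various $\log C$ contributions combine to exactly the coefficient $m/n$ of $\log(C^2)$ stated in the lemma, and not a larger one. A secondary subtlety is the precise version of arithmetic Minkowski invoked, since the statement demands a $\sup_\sigma$ estimate and not a sum-over-places or an $L^2$ estimate; it is this choice that produces the specific correction $3\log(n)+A$ rather than the more familiar $\tfrac{1}{2}\log n$ of Euclidean-norm Minkowski.
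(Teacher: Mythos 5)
The paper itself does not prove this lemma: it only states it and refers the reader to \cite{gasbarri} for a proof, so there is no in-paper argument to compare against. Your slope/Minkowski strategy is the standard mechanism for Arakelov-theoretic Siegel lemmas and is almost certainly what \cite{gasbarri} uses, so the overall structure is right: control $\widehat{\deg}(E_1)$ from below via Hadamard on a wedge of small generators, control $\widehat{\deg}(E_1/W)$ from above via $\|f\|_\sigma \leq C$ and $\widehat{\mu}_{\max}(E_2)$, deduce a lower bound on $\widehat{\mu}(W)$, then invoke an arithmetic Minkowski/Bombieri--Vaaler step.

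That said, there is a genuine loose end you acknowledge but don't resolve, and it is not purely cosmetic. The paper's $\widehat{\deg}$ is an \emph{unnormalised} sum over all $\sigma\in M_K^\infty$, whereas the conclusion bounds $\sup_\sigma\log\|v\|_\sigma$, a quantity living at a single place. Passing from a determinant/covolume bound (which is a sum over places) to a $\sup_\sigma$ bound requires an averaging step, typically via Dirichlet's unit theorem to rebalance $v$ by a unit; this is exactly where the $[K:\Q]$ factors in your intermediate estimates must cancel, and where the constant $A$ acquires its dependence on $K$. As written, your lower bound $\widehat{\deg}(E_1)\geq -m\log C$ ``up to a constant'' and upper bound on $\widehat{\deg}(E_1/W)$ silently suppress factors of $[K:\Q]$ (coming from summing $\log C$ over the infinite places) that are \emph{not} constants independent of $C$, so they cannot simply be absorbed into $A$. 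You need to either use normalised degrees throughout, or carry the $[K:\Q]$ factors explicitly and show they cancel against the $[K:\Q]$ normalisation in the Minkowski step. Relatedly, your own bookkeeping yields a coefficient $(2m-n)/n$ on $\log C$ rather than the stated $2m/n$ --- which is harmless since it is stronger, but signals that the arithmetic hasn't actually been checked to match the lemma's exact shape, including the specific $3\log n$ correction. Until the normalisation and the Minkowski variant producing a $\sup$-norm bound are pinned down, the proposal is a correct blueprint rather than a proof.
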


A proof of this version of Siegel Lemma can be found in \cite{gasbarri}.

Let $\cX\to\Spec(O_K)$ be a projective arithmetic variety and $\overline\cL$ be an hermitian line bundle over it.  We will say that $\overline\cL$ is {\it arithmetically very ample} if:

i) the involved line bundle $\cL$ is relatively ample over $\cX$;

ii) the $O_K$--algebra $\oplus_{d\geq 0}H^0(\cX,\cL^d)$ is generated by $H^0(\cX,\cL)$;

iii) the normed $O_K$--module $H^0(\cX,\cL)$ is generated by sections of $\sup$ norm less or equal then one.

Observe that, if $\overline\cL$ is arithmetically very ample, then for every $d\geq 0$ the normed $O_K$--module $H^0(\cX,\cL^d)$ is generated by sections of $\sup$ norm less or equal then one.

\begin{definition} An {\rm arithmetic polarization $(\cX,\overline\cL)$ of $X_K$} is the choice of the following data:

a) An ample line bundle $L_K$ over $X_K$;

b) A projective model $\cX\to\Spec(O_K)$ of $X_K$ over $O_K$.

c) A arithmetically very ample line bundle hermitian line bundle $\overline\cL$ over $\cX$ which is a model of $L_K^{\otimes r}$ for a suitable positive interger $r$.

d)  For every $\tau\in M_K^\infty$, we suppose that the metric on $L_\tau$ is smooth and positive.
\end{definition}

Remark that, if $L_K$ is an ample line bundle over $X_K$, we can find a positive integer $d$ and an arithmetic polarization $(\cX,\overline\cL_1)$ such that $\overline\cL_1$ is a model of $L_K^d$.

\smallskip

\begin{itemize}

\item[--] Let $L/K$ be a finite extension and $O_L$ the ring of integers of $L$. An $L$--point of $X_K$ is a $K$--morphism $P_L:\Spec(L)\to X_K$. The set of $L$ points of $X_K$ is noted  $X_K(L)$. If $(\cX,\cL)$ is an arithmetic polarization of $X_K$, by the valuative criterion of properness, every $L$--point $P_L:\Spec(L)\to X_K$ extends uniquely to an $O_K$--morphism $P_{O_L}:\Spec(O_L)\to \cX$. In this case, $P_{O_L}^\ast(\cL)$ is a hermitian line bundle on $\Spec(O_L)$. We define the height of $P_L$ with respect to $\cL$ to be the real number $h_\cL(P_L):={{\widehat{\deg}(P_L^\ast(\cL))}\over{[L:\Q]}}$.

\item[--]  {\it Liouville inequality}:  Let $p\in X_K(K)$ be a rational point. Let $p_0\in X_{\sigma_0}$ be its image. Then, for every positive integer $d$ and  global section $s\in H^0(\cX, \cL^d)$ such that $s(p)\neq 0$ we have
\begin{equation}\label{liouville1} 
\log\Vert s\Vert_{\sigma_0}(p)\geq-[K:\Q]\left( d\cdot h_\cL(p)+\log^+\Vert s\Vert\right).
\end{equation}
where $\log^+(a)=\sup\{0,\log(a)\}$. For a proof of this form of Liouville inequality cf. \cite{gasbarri2} Theorem 3.1.

\end{itemize}

\

\section{Influence of the base point on the growing conditions}\label{basepoints}

\

When one introduces the characteristic function $T_\varphi(r)$ of a holomorphic map $\varphi:\Delta_R\to X$ from a disk (possibly of infinite radius) to a projective variety, one needs to fix a base point, usually the center of the disk. The choice of this point, usually has no relevance on the study of the grown conditions of the function or of  Nevanlinna properties of it in general. 

Nevertheless, if one wants to use tools from Nevanlinna Theory to estimate norms of global sections of line bundles in a specific point, the choice of the point may be relevant and one may need to estimate how the different constants vary by changing it. 

Even the definition of the characteristic function depends on the choice of the point, in particular because, in order to define it, one needs to integrate on varying domains centered on the point itself. 

If the base point is changed and the new base point is very near to the old one with respect to the radius $r$, the modification of the characteristic function $T_\varphi(r)$ is essentially irrelevant. But if the distance of the two points is comparable with $r$, then the characteristic function may, a priori, change a lot for values of the radius comparable to $r$.

To explain the problem, let's fix some notation. 

For every positive real number $r$ denote by $\Delta_r$ the open disk {\it centered in 0} and of radius $r$ and by $\overline\Delta_r$ its closure. We fix a point $w_0\in\C$ and an hermitian line bundle $M$ on $\C$. 

We consider the Green function
\begin{equation}
\begin{array}{rcl}
g_r(w_o;z):&\overline\Delta_r\longrightarrow [0;+\infty]\\
z&\longrightarrow \log\left\vert {{r^2-z\overline w_0}\over{r(z-w_0)}}\right\vert.\\
\end{array}
\end{equation}
The function $g_r(w_0;z)$ is harmonic in $\Delta_r\setminus\{ w_0\}$, we have that  $g_r(w_o;z)^{-1}(0)=S_r:=\{\vert z\vert=r\}$,   $g_r(w_o;z)^{-1}(\infty)=\{w_0\}$ and it satisfies the differential equation
\begin{equation}
dd^c(g_r(w_o;z))=\mu_{w_0,r}-\delta_{w_0}
\end{equation}
where $\delta_{w_0}$ is the Dirac measure concentrated in $w_0$ and $\mu_{w_0,r}$ is the Poisson measure on $S_r$ given by $Re{{z+w_0}\over{z-w_0}}\cdot {{d\theta}\over{2\pi}}$ (with $z=re^{i\theta}$ and $Re(\cdot)$ is the real part).

Let $X$ be an analytic variety,  $M$ be an hermitian line bundle on it and let $c_1(M)$ be the first Chern form of $M$. We denote by $\Vert\cdot\Vert_M$ the hermitian metric on $M$.

 Let $\varphi:\C\to X$ be an holomorphic function. The characteristic function of $\varphi$ and $M$ is 
 \begin{equation}
 T_{\varphi,M}(r):=\int_0^r{{1}\over{t}}\cdot\int_{\Delta_t}\varphi^\ast(c_1(M)).
 \end{equation}
 
 Let $s$ be a global section of $\varphi^\ast(M)$. For every $z\in\C$ we denote by $v_z(s)$ the multiplicity of $s$ in $z$. In order to simplify the exposition, we suppose that $s(0)\neq 0$. The Nevanlinna First Main Theorem holds:
 \begin{equation}\label{FMT}
 \int_{S_r}\log\Vert s\Vert(re^{i\theta}){{d\theta}\over{2\pi}}+ T_{\varphi,M}(r)=\sum_{\vert z\vert<r}v_z(s)\cdot\log\left\vert{{r}\over{z}}\right\vert+\log\Vert s\Vert(0).
 \end{equation}
 
Observe that $\log\left\vert{{r}\over{z}}\right\vert=g_r(0,z)$. 
\begin{definition} The characteristic function of $\varphi$ and $M$ {\rm with respect to $w_0$} to be the function
\begin{equation}
 T_{\varphi,M,w_0}(r):=\int_0^1{{1}\over{t}}\cdot\int_{\exp(-g_r(w_0,z))<t}\varphi^\ast(c_1(M)).
  \end{equation}
  \end{definition}
 For  the characteristic function of $\varphi$ and $M$ with respect to $w_0$ a theorem similar to the Nevanlinna First Main Theorem holds:
 \begin{equation}\label{FMTw}
 \int_{S_r}\log\Vert s\Vert(re^{i\theta})\mu_{w_0,r}+ T_{\varphi,M,w_0}(r)=\sum_{\vert z\vert<r}v_z(s)\cdot g_r(w_0,z)+\log\Vert s\Vert(w_0).
 \end{equation}
 Observe that $T_{\varphi,M,0}(r)=T_{\varphi,M}(r)$. 
 
 Thus, since  $\int_{S_r}\log\Vert s\Vert(re^{i\theta}){{d\theta}\over{2\pi}}$ and $ \int_{S_r}\log\Vert s\Vert(re^{i\theta})\mu_{w_0,r}$ are easy to compare,  if $r$ is very big with respect to $\vert w_0\vert$, formulas \ref{FMT} and \ref{FMTw} are comparable.
 
 On the other side, when $\vert w_0\vert$ is comparable to $r$, it is not easy to  estimate $T_{\varphi,M,w_0}(r)$ in terms of $T_{\varphi,M}(r)$. Consequently it is not easy to deduce informations from  formula  \ref{FMTw}  if we  only have informations about $T_{\varphi,M}(r)$.
 
 In this section we will show that,  when $X$ be quasi--projective variety and $L$  is a very ample line bundle on it equipped with a positive metric, we can deduce some informations from formula \ref{FMTw} and the knowledge of $T_{\varphi,M}(r)$ even when $\vert w_0\vert$ is comparable to $r$. 
 
 Let $X$ be quasi--projective variety and $M$ a very ample line bundle on it equipped with a positive metric.  Let $\varphi:\C\to X$  be an holomorphic map. If $s\in H^0(X; M)$  and $r>0$, we will denote by $\Vert s\Vert _r$ the real number $\sup_{\vert z\vert\leq r}\{\Vert \varphi^\ast(s)\Vert(z)\}$.

While the standard First Main Theorem tells us that, if there are many zeros of a section which are very near to the origin of $\C$, then the  value of the norm of the section at the origin must be small with respect to the sup norm and the characteristic function, we will prove similar estimates  replacing the origin by an arbitrary point in $\Delta_r$. The interesting fact is that this estimate is in terms of the characteristic function {\it with respect to 0} and not with respect to the point itself. 

The first Theorem in this direction is for {\it affine} varieties:

\begin{theorem}\label{generalFMT} Under the hypotheses above and for every $\epsilon>0$, suppose, moreover, that $X$ is an affine variety. Then we can find a constant $A$  (depending only on $\varphi$ and $\epsilon$, in particular independent from $r$) such that, for every $r>0$ and $w_0$ such that $\vert w_0\vert<r$ we have that
\begin{equation}
\sum_{\vert z\vert<r}v_z(s)\cdot g_r(w_0,z)+\log\Vert s\Vert(w_0)\leq \log\Vert s\Vert_{r}+A(T_{\varphi, M}(1+\epsilon)r)+1).
\end{equation}
\end{theorem}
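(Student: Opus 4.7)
The plan is to apply the First Main Theorem with respect to $w_{0}$ (formula \eqref{FMTw}) to the left-hand side and then to compare the base-point-dependent characteristic $T_{\varphi,M,w_{0}}(r)$ with the usual $T_{\varphi,M}((1+\epsilon)r)$. By \eqref{FMTw}, the left-hand side equals
\[
\int_{S_{r}}\log\|s\|(re^{i\theta})\,\mu_{w_{0},r}\;+\;T_{\varphi,M,w_{0}}(r).
\]
The first summand is at most $\log\|s\|_{r}$, since $\mu_{w_{0},r}$ is a probability measure on $S_{r}$ and $\log\|\varphi^{*}s\|(re^{i\theta})\leq\log\|s\|_{r}$ pointwise. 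The whole theorem therefore reduces to proving the key estimate
\[
T_{\varphi,M,w_{0}}(r)\leq A\bigl(T_{\varphi,M}((1+\epsilon)r)+1\bigr)
\]
for some constant $A=A(\varphi,\epsilon)$.

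The affineness of $X$ is used to produce a global subharmonic potential $u\colon\C\to\bR$ for $\varphi^{*}c_{1}(M)$ that is bounded below. Embedding $X$ as a closed subvariety of $\bA^{N}$ yields entire coordinate functions $f_{i}:=x_{i}\circ\varphi$, and the standard inclusion $\bA^{N}\hookrightarrow\bP^{N}$ identifies a projective compactification $\bar{X}\subset\bP^{N}$. After replacing $M$ by a suitable power (which merely scales $T_{\varphi,M}$ by a constant absorbed into $A$), the given positive metric on $M$ differs from the pulled-back Fubini--Study metric by $dd^{c}$ of a smooth function bounded on the compact $\bar{X}$, so
\[
\varphi^{*}c_{1}(M)=dd^{c}u,\qquad u(z)=\log\!\Bigl(1+\sum_{i}|f_{i}(z)|^{2}\Bigr)+\psi(\varphi(z)),
\]
for some smooth bounded $\psi$ on $\bar{X}$. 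Hence $u$ is subharmonic on $\C$ and bounded below by some $-C_{0}$ depending only on $\varphi$.

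Green's identity for the Green function of $\Delta_{r}$ at $w_{0}$ (using $dd^{c}g_{r}(w_{0},\cdot)=\mu_{w_{0},r}-\delta_{w_{0}}$ and $g_{r}(w_{0},\cdot)|_{S_{r}}\equiv 0$) yields
\[
T_{\varphi,M,w_{0}}(r)=\int_{\Delta_{r}}g_{r}(w_{0},z)\,dd^{c}u=\tfrac{1}{2}\!\left(\int_{S_{r}}u\,d\mu_{w_{0},r}-u(w_{0})\right),
\]
while Jensen's formula gives $m_{u}(\rho)-u(0)=2\,T_{\varphi,M}(\rho)$, where $m_{u}(\rho)=\frac{1}{2\pi}\int_{0}^{2\pi}u(\rho e^{i\theta})\,d\theta$. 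Bounding $\int u\,d\mu_{w_{0},r}\leq\sup_{S_{r}}u$ and applying Poisson--Jensen to the nonnegative subharmonic function $u+C_{0}$ on $\Delta_{(1+\epsilon)r}$, together with the elementary Poisson-kernel estimate
\[
\sup_{|w|=r}\frac{R^{2}-|w|^{2}}{|Re^{i\theta}-w|^{2}}\leq\frac{2+\epsilon}{\epsilon}\qquad(R=(1+\epsilon)r),
\]
gives $\sup_{S_{r}}u\leq\frac{2+\epsilon}{\epsilon}\,m_{u}((1+\epsilon)r)+O_{\varphi}(1)$. Substituting back yields the required bound with $A$ essentially $(2+\epsilon)/\epsilon$ plus $\varphi$-dependent additive constants.

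The principal obstacle is the construction of such a potential $u$: the affine hypothesis supplies the explicit, manifestly bounded-below choice $\log(1+\sum|f_{i}|^{2})$ precisely because the entire coordinates $f_{i}$ never escape to infinity in a projective completion. In the projective case no such global potential exists along a generic entire curve, which is what forces the restriction to the complement of an exceptional union of small disks, as in Theorem \ref{comparing T1}.
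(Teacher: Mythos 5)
Your proof is correct and follows the paper's argument: both reduce via formula \eqref{FMTw} to bounding $T_{\varphi,M,w_0}(r)$ by $A(T_{\varphi,M}((1+\epsilon)r)+1)$, and both exploit the affineness of $X$ to produce a bounded-below subharmonic potential for $\varphi^*c_1(M)$, then control it on $S_r$ via the Poisson kernel bound $(R+r)/(R-r)=(2+\epsilon)/\epsilon$ on the larger disk $\Delta_{(1+\epsilon)r}$. The only difference is presentational: the paper reduces outright to $X=\C^N$, $M=\cO(1)$ with the Fubini--Study metric and uses the manifest nonnegativity of $\log(1+\sum_j|f_j|^2)$, whereas you make the comparison of metrics explicit via a bounded correction $\psi$ on $\bar X$ and work with $u+C_0$; this is the same idea.
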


Observe that this Theorem is a slight generalization of the classical estimate: if $f$ is an entire function, then
\begin{equation}
\log(\Vert f\Vert_r)\leq 3T_f(2r).
\end{equation}
(cf. \cite{La}, Theorem 2.5 page 170).

\begin{proof}  We first remark that, since $\mu_{w_0,r}$ is a positive measure and $\int_{S_r}\mu_{w_0,r}=1$ , we have that
\begin{equation}
\int_{S_r}\log\Vert s\Vert(re^{i\theta})\mu_{w_0,r}\leq \log\Vert s\Vert_{r}.
\end{equation}
Consequently the Theorem is consequence of the inequality above, Formula \ref{FMTw} and  Proposition \ref{comparing T} below.
\end{proof}
\begin{proposition}\label{comparing T} Under the hypotheses of Theorem \ref{generalFMT}, for every $\epsilon>0$; we can find   a constant  $A$, such that, for every $w\in\Delta_r$ we have
\begin{equation}
T_{\varphi, M, w}(r)\leq A\cdot( T_{\varphi, M}((1+\epsilon)r)+1).
\end{equation}
\end{proposition}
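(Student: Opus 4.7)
The approach is to exploit the affineness of $X$ to produce a nowhere-vanishing section of a suitable power of $M$, and then reduce the comparison to a standard Poisson kernel bound on the resulting bounded-below subharmonic function.

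First, I would use the affineness of $X$: writing $X = \bar X\setminus D$ for a projective compactification $\bar X$ in which the boundary $D$ supports an ample divisor, a Kodaira-lemma argument produces a positive integer $k$ and a section $s_0\in H^0(\bar X, \bar M^{\otimes k})$ (for the extension $\bar M$ of $M$) whose zero divisor is supported in $D$. Thus $s_0|_X$ is nowhere vanishing on $X$, and $\|s_0\|$ is bounded on $X$ because the positive metric extends continuously to the compact $\bar X$. The pullback $\varphi^*(s_0)$ is therefore a nowhere-vanishing holomorphic section of $\varphi^*M^{\otimes k}$ on $\mathbb C$.

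Setting $v:=-\log\|\varphi^*(s_0)\|$, Poincaré-Lelong (with no zeros contributing) gives $dd^c v = k\omega$ with $\omega = \varphi^*c_1(M)$, so $v$ is smooth subharmonic on $\mathbb C$ and bounded below; shifting $v$ by a constant, which leaves all the quantities of interest unchanged, we may assume $v\geq 0$. The First Main Theorem \ref{FMTw} applied to $s_0$, with vanishing ramification term, reads
\begin{equation*}
kT_{\varphi,M,w}(r) \;=\; \int_{S_r} v\, d\mu_{w,r} - v(w) \;\leq\; \int_{S_r} v\, d\mu_{w,r}.
\end{equation*}
Since $v\geq 0$ is subharmonic on $\overline{\Delta_R}$ with $R=(1+\epsilon)r$, we have $v|_{S_r}\leq (H_Rv)|_{S_r}$ for the harmonic extension $H_Rv$ of $v|_{S_R}$; taking Poisson integrals on $\Delta_r$ and using the elementary bound $P_R(w,\theta)\leq (R+r)/(R-r) = (2+\epsilon)/\epsilon$ for $|w|\leq r$ gives
\begin{equation*}
\int_{S_r} v\, d\mu_{w,r} \;\leq\; H_Rv(w) \;\leq\; \frac{2+\epsilon}{\epsilon}\int_{S_R} v\, \frac{d\theta}{2\pi}.
\end{equation*}
Formula \ref{FMT} applied to $s_0$ at the origin evaluates the last integral as $kT_{\varphi,M}((1+\epsilon)r) + C_1$, where $C_1$ depends only on $\varphi(0)$, $s_0$, and the shift constant (hence only on $\varphi$, $X$, $M$, $\epsilon$). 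Dividing by $k$ and setting $A := \frac{2+\epsilon}{\epsilon}\cdot\max(1,\, C_1/k)$ yields the stated bound $T_{\varphi,M,w}(r) \leq A(T_{\varphi,M}((1+\epsilon)r) + 1)$.

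The main obstacle is the very first step, the production of the nowhere-vanishing section $s_0$ of $M^{\otimes k}$. It is exactly at this point that affineness is indispensable: the boundary divisor $D$ must support an ample divisor so that a suitable power of $M$ is realized by a section whose divisor lies in $D$. Without affineness one is forced to use sections of $M^{\otimes k}$ that vanish somewhere on $\varphi(\mathbb C)$; the resulting zeros contribute positively and uncontrollably — via the term $\sum v_z(s)\, g_r(w,z)$ in \ref{FMTw}, which blows up when zeros of $\varphi^*(s)$ accumulate near $w$ — to the right-hand side of the First Main Theorem at $w$. This is the technical origin of the exceptional union of small discs that must be excised in the projective version, Theorem \ref{comparing T1}.
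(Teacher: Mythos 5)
Your proof is correct and is essentially the paper's own argument, presented intrinsically rather than in coordinates: the paper reduces to $X=\C^N\subset\P^N$ with $M=\cO(1)$ and works with the subharmonic function $\tfrac12\log(1+\sum|f_j|^2)$, which is exactly your $v=-\log\Vert\varphi^*(s_0)\Vert$ for the hyperplane-at-infinity section $s_0=x_0$, and then applies the same Poisson-kernel bound $(R+r)/(R-r)$ with $R=(1+\epsilon)r$. Indeed, the remark immediately after the paper's proof explicitly records your generalization (existence of a global section of $M$ with $\Vert s\Vert\le 1$ generating the pullback), and your closing paragraph correctly identifies that the failure of this step in the projective case is what forces the Cartan-type exceptional set in Theorem \ref{comparing T1}.
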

\begin{proof} We may suppose that $X=\C^N\subset\P^N$ and $M$ is $\cO(1)$ equipped with the Fubini--Study metric. 

In this case $\varphi(z)=(f_1(z),\dots, f_N(z))$ where the $f_j(z)$ are entire functions and
\begin{equation}
T_{\varphi, M,w}(r)={{1}\over{2}}\cdot\int_{S_1}\log(1+\sum_{j=1}^N\vert f_j(re^{i\theta})\vert^2)Re\left({{re^{i\theta}+w}\over{re^{i\theta}-w}}\right){{d\theta}\over{2\pi}}-{{1}\over{2}}\cdot\log(1+\sum_{j=1}^N\vert f_j(w)\vert^2).
\end{equation}
Let $w_1\in\overline{\Delta}_r$ such that ${{1}\over{2}}\cdot\log(1+\sum_{j=1}^N\vert f_j(w_1)\vert^2)=\max_{w\in\overline{\Delta}_r}\{{{1}\over{2}}\cdot\log(1+\sum_{j=1}^N\vert f_j(w)\vert^2)\}$. Since the function 
${{1}\over{2}}\cdot\log(1+\sum_{j=1}^N\vert f_j(w)\vert^2)$ is subharmonic, for every $R>r$ we have that
\begin{equation}
{{1}\over{2}}\cdot\log(1+\sum_{j=1}^N\vert f_j(w_1)\vert^2)\leq{{1}\over{2}}\cdot\int_{S_1}\log(1+\sum_{j=1}^N\vert f_j(Re^{i\theta})\vert^2)Re\left({{Re^{i\theta}+w_1}\over{Re^{i\theta}-w_1}}\right){{d\theta}\over{2\pi}}
\end{equation}
and $Re\left({{Re^{i\theta}+w_1}\over{Re^{i\theta}-w_1}}\right)\leq {{R+r}\over{R-r}}$ we obtain
\begin{equation}
 \begin{array}{rcl}
 T_{\varphi, M,w}(r)&\leq&{{1}\over{2}}\cdot\int_{S_1}\log(1+\sum_{j=1}^N\vert f_j(re^{i\theta})\vert^2)Re\left({{re^{i\theta}+w}\over{re^{i\theta}-w}}\right){{d\theta}\over{2\pi}}\\
 &\leq &{{1}\over{2}}\cdot\log(1+\sum_{j=1}^N\vert f_j(w_1)\vert^2)\\
 &\leq & {{R+r}\over{R-r}}\int_{S_1}\log(1+\sum_{j=1}^N\vert f_j(Re^{i\theta}\vert^2)\cdot{{d\theta}\over{2\pi}})\\
 &=&{{R+r}\over{R-r}}T_{\varphi,M}(R)+{{R+r}\over{R-r}}\cdot{{1}\over{2}}\cdot\log(1+\sum_{j=1}^N\vert f_j(0)\vert^2).
 \end{array}
 \end{equation}
 Thus take  $R=(1+\epsilon)r$ and $A=\max\{ 2/\epsilon, 2/\epsilon\cdot\log(1+\sum_j\vert f_j(0)\vert^2)\}$ and we obtain
 \begin{equation}
 T_{\varphi,M,w}(r)\leq A(T_{\varphi, M}((1+\epsilon)r)+1)
 \end{equation}
 which concludes the proof.  \end{proof}
 
 Observe that a similar proof holds if we replace $\C^N$ with a variety $X$ where the pull back to $\C$ of the positive line bundle $M$ is generated by a global section $s$ such that $\Vert s\Vert\leq 1$. Above, this section is the hyperplane at infinity. 
 
 Adapting the proof of Theorem \ref{comparing T}, we can obtain a uniform estimate for general functions and   $w_0$ belonging to a fixed compact set: 
 \begin{proposition}\label{easyFMT} Let $r_0>0$ and $\epsilon>0$. Let $X$ be a projective variety and $M$ a line bundle over it.  Suppose the metric on $M$ is positive. Let $\varphi:\C\to X$ be an entire curve on $X$. Then we can find positive constants $A_i:=A_i(r_0,\epsilon)$ such that, for every $w_0\in\Delta_{r_0}$ and every $r>r_0$,  we have
\begin{equation}
\left\vert T_{\varphi,M,w_0}(r)-A_1\cdot T_{\varphi,M}((1+\epsilon)r)\right\vert\leq A_2.
\end{equation}
\end{proposition}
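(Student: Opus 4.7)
The plan is to adapt the proof of Proposition \ref{comparing T} by reducing the projective case to that of projective space equipped with the Fubini--Study metric. Since $M$ carries a smooth positive metric on the projective variety $X$, some power $M^{\otimes d}$ is very ample, and a basis of $H^0(X, M^{\otimes d})$ yields an embedding $\iota : X \hookrightarrow \P^N$. The Fubini--Study metric on $\cO(1)$ pulls back to a metric on $M^{\otimes d}$ that differs from the given metric by a bounded continuous function on the compact variety $X$, so
\begin{equation}
T_{\iota\circ\varphi,\, \cO(1),\, w_0}(r) = d\cdot T_{\varphi, M, w_0}(r) + O(1)
\end{equation}
uniformly in $w_0$ and $r$, and similarly for the standard characteristic function. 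It therefore suffices to prove the statement for $\psi := \iota\circ\varphi : \C \to \P^N$ with the Fubini--Study metric.

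Next, choose entire functions $h_0, \ldots, h_N$ without common zeros giving a reduced representation $\psi = [h_0 : \cdots : h_N]$, and set $u(z) := \frac{1}{2}\log\sum_{j=0}^N |h_j(z)|^2$. The Poisson--Jensen formula then reads
\begin{equation}
T_{\psi, \cO(1), w_0}(r) = \int_{S_r} u(re^{i\theta}) \, \mu_{w_0, r} - u(w_0),
\end{equation}
and $u(w_0)$ is uniformly bounded for $w_0 \in \overline\Delta_{r_0}$. The heart of the argument is to compare $\int_{S_r} u \, \mu_{w_0, r}$ with the Lebesgue mean $\int_{S_r} u \, d\theta/(2\pi) = T_\psi(r) + u(0)$. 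I would split into two regimes: for $r \in (r_0, 2r_0]$ the estimate is trivial by continuity, since everything is uniformly bounded on this compact range; for $r \geq 2r_0$, the Poisson density $(r^2 - |w_0|^2)/|re^{i\theta} - w_0|^2$ is pinched between two positive constants depending only on $r_0$. The upper bound in this regime is obtained exactly as in Proposition \ref{comparing T}: one dominates the integral by $\sup_{\overline\Delta_r} u$, applies the Poisson representation on the larger disk $\Delta_{(1+\epsilon)r}$ where the Poisson kernel is bounded by $(2+\epsilon)/\epsilon$ for points at distance at most $r$ from the origin, and substitutes into the identity above. The lower bound combines the subharmonicity inequality $u(w_0) \leq \int u \, \mu_{w_0, r}$ with the lower bound on the Poisson density, after decomposing $u = u^+ - u^-$.

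The main obstacle is controlling the sign of $u$ in the lower bound direction: Poisson-type estimates naturally produce upper bounds for subharmonic functions rather than lower bounds, so the negative contribution $\int_{S_r} u^- \, d\theta/(2\pi)$ must be bounded separately from $\int u^+$. The two-sided pinching of the Poisson density, which is valid precisely because $r$ is kept bounded away from $r_0$, lets one convert the sign-sensitive estimates into a genuine two-sided comparison with $T_\psi(r)$, provided $\int u^-$ is controlled in terms of $T_\psi(r) + u(0)$. This last control can be obtained by Jensen's inequality $\int u \, d\theta \geq u(0)$ together with the first main theorem applied to appropriate global sections of $\cO(1)$. With these pieces in place, the statement follows with $A_1$ an explicit function of $r_0$ and $\epsilon$ (rescaled by $1/d$ from the reduction step) and $A_2$ absorbing all remaining additive errors depending on $r_0, \epsilon, \varphi, X$ and $M$.
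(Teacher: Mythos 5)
Your overall strategy coincides with the paper's: reduce to $\P^N$ with the Fubini--Study metric, invoke the Poisson--Jensen identity $T_{\psi,\cO(1),w_0}(r)=\int_{S_r}u\,\mu_{w_0,r}-u(w_0)$ with $u=\tfrac12\log\sum_j|h_j|^2$, bound $u(w_0)$ on $\overline\Delta_{r_0}$ by compactness, and obtain the upper bound on $\int_{S_r}u\,\mu_{w_0,r}$ via subharmonicity of $u$ at a maximizer in $\overline\Delta_r$ together with the Poisson kernel estimate on the larger circle $S_{(1+\epsilon)r}$. Where you diverge is the lower bound: the paper simply ``reverses the role of $0$ and $w_0$'' (runs the same one-sided estimate with the two base points swapped), whereas you decompose $u=u^+-u^-$, pinch the Poisson density between two constants depending only on $r_0$ for $r\ge 2r_0$, and then try to control $\int_{S_r}u^-\,d\theta/(2\pi)$.

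That last step is the gap. You assert that $\int u^-$ ``can be obtained by Jensen's inequality $\int u\,d\theta\ge u(0)$ together with the first main theorem applied to appropriate global sections of $\cO(1)$,'' but neither ingredient bounds $\int u^-$ by $T_{\psi}(R)+O(1)$. Jensen gives $\int u^- \le \int u^+ - u(0)$, which just trades the unknown $\int u^-$ for the unknown $\int u^+$; the FMT applied to a section $s$ of $\cO(1)$ controls $\int(-\log\|s\|\circ\psi)$, not $\int u^-$, and the two are not the same object (indeed $-\log\|s_j\|\circ\psi = u-\log|h_j|$). Worse, $\int u^-$ is not intrinsic to the map $\psi$ at all: a reduced representation is defined only up to multiplication of all the $h_j$ by a common nonvanishing entire function $e^{g}$, under which $u\mapsto u+\mathrm{Re}(g)$ while $T_\psi$ and every other quantity in the proposition are unchanged. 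Choosing, say, $g(z)=-z^2$ inflates $\int_{S_r}u^-\,d\theta$ to order $r^2$ while $T_\psi(r)$ stays the same, so no bound of the form $\int u^-\le C\,T_\psi(R)+C'$ uniform in $r$ can hold without first pinning down a specific gauge. You would either have to specify such a choice and prove it has the needed property, or abandon the $u^\pm$ decomposition in favour of the paper's symmetric ``reverse roles'' manoeuvre, which produces the second inequality from the already-established first one without ever having to estimate $\int u^-$ on its own.
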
 
\begin{proof} Since the metric on $M$ is positive, we may suppose that $X=\P^N$ and $M=\cO(1)$ with the Fubini--Study metric. In this case, we may find $N+1$ entire functions $f_0(z),\dots, f_N(z)$ , non simultaneously vanishing, such that $\varphi(z)=[f_0(z):\cdot :f_N(z)]$. Moreover an application of the First Main Theorem gives that
\begin{equation}
T_{\varphi, M,w}(r)={{1}\over{2}}\cdot\int_{S_1}\log(\sum_{j=0}^N\vert f_j(re^{i\theta})\vert^2)Re\left({{re^{i\theta}+w}\over{re^{i\theta}-w}}\right){{d\theta}\over{2\pi}}-{{1}\over{2}}\cdot\log(\sum_{j=0}^N\vert f_j(w)\vert^2).
\end{equation}
Since the function $\log(\sum_{j=0}^N\vert f_j(z)\vert^2)$ is subharmonic, an estimate similar to the one used in the proof of Theorem \ref{generalFMT} gives, for $R\geq r$,
\begin{equation}
T_{\varphi, M, w_0}(r)\leq {{R+r}\over{R-r}}\left(T_{\varphi,M}(R)+{{1}\over{2}}\cdot\log(\sum_{j=1}^N\vert f_j(0)\vert^2)\right)-{{1}\over{2}}\cdot\log(\sum_{j=0}^N\vert f_j(w_0)\vert^2).
\end{equation}

The function ${{1}\over{2}}\cdot\log(\sum_{j=0}^N\vert f_j(z)\vert^2)$ is a continuous function on the compact set $\overline\Delta_{r_0}$, thus it is lower bounded over it. As before, taking $R=(1+\epsilon)r$ and reversing the role of $0$ and $w_0$ we obtain the proof of the Proposition.\end{proof}

Suppose that  $C$ is an algebraic curve inside $X$ and $s\in H^0(C, M^d\vert_C)$ is non zero and  $d$ is a positive integer. Then, the number of zeros of $s$ (counted with multiplicity) is given by $d$ times the degree of $C$ with respect to $M$. If $\varphi:\C\to X$ is an entire curve, $s\in H^0(X, M^d)$ is non zero and $r>0$, the number of zeros (counted with multiplicity) of the restriction of $\varphi^\ast(s)$ to $\Delta_r$ can be estimate in terms of $T_{\varphi, M}(r)$ {\it and the section itself}. The constants involved in the estimation depend, in general on $s$ and may vary in a complicated way if we change it. 
As a consequence of Proposition \ref{easyFMT} above, we obtain an estimate on the number of zeros of of the restriction of $\varphi^\ast(s)$ to $\Delta_r$ which depends only on $T_{\varphi, M}(r)$ and $d$ {\it but it is independent on $s$}.

We will denote by $\deg(\varphi^\ast(s))_r$ the number $\sum_{\vert z\vert<r}v_z(\varphi^\ast(s))$, where $v_z(\varphi^\ast(s))$ is the multiplicity at $z$ of $\varphi^\ast(s)$.

\begin{proposition}\label{zeros} Suppose that we are in the hypothesis above and $\epsilon>0$, then there are  constants $C_i=C_i(\varphi, M, d,\epsilon)$, depending only on $\varphi$, $M$, $d$ and $\epsilon$, {\rm but independent on $s\in H^0(X,M^d)$}, such that
\begin{equation}
\deg(\varphi^\ast(s))_r\leq C_1\cdot T_{\varphi,M}((1+\epsilon)r)+C_2.
\end{equation}
\end{proposition}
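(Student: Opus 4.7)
\emph{Proof plan.} The idea is to identify $\deg(\varphi^\ast(s))_r$ with the zero-count of an auxiliary entire function on $\C$ and apply the First Main Theorem at a carefully chosen base point, invoking Proposition \ref{easyFMT} to control the resulting characteristic function. First, using that $M$ is very ample with positive metric, I would fix an embedding $X\hookrightarrow\bP^N$ with $M\simeq\cO(1)\vert_X$ (replacing $M$ by a suitable positive power and absorbing this into $d$), write $\varphi=[\varphi_0:\cdots:\varphi_N]$ with $\varphi_j$ coprime entire, and view $s$ as a homogeneous polynomial $P$ of degree $d$. Then $g(z):=P(\varphi_0(z),\dots,\varphi_N(z))$ is entire and, by hypothesis, not identically zero; the quantity $\deg(\varphi^\ast(s))_r$ coincides with the number $n_g(r)$ of zeros of $g$ in $\Delta_r$ counted with multiplicity.

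Next I would fix $\epsilon'>0$ small enough that $(1+\epsilon')^2\le 1+\epsilon$, and set $R:=(1+\epsilon')r$ and $r_0:=1$. A direct computation of the Green function shows that, for $r$ sufficiently large depending on $\epsilon$ (the residual case of bounded $r$ being a trivial compactness bound absorbed in $C_2$), one has $g_R(w_0,z)\ge c(\epsilon):=\tfrac12\log(1+\epsilon')>0$ whenever $|z|<r$ and $|w_0|\le r_0$. Choose $w_0\in\Delta_{r_0}$ with $\|\varphi^\ast(s)\|(w_0)\ge\tfrac12\max_{|z|\le r_0}\|\varphi^\ast(s)\|(z)$, which is possible because $\varphi^\ast(s)\not\equiv 0$. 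Applying formula \eqref{FMTw} to $\varphi^\ast(s)$ on $\Delta_R$ with base point $w_0$ and combining it with the Green-function bound gives
\[
c(\epsilon)\,n_g(r)\;\le\;d\,T_{\varphi,M,w_0}(R)+\int_{S_R}\log\|\varphi^\ast(s)\|\,\mu_{w_0,R}-\log\|\varphi^\ast(s)\|(w_0).
\]

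The remaining task is to bound the three right-hand terms uniformly in $s$. For the first, Proposition \ref{easyFMT} (applied to $M^d$ with auxiliary parameter tuned so that the two dilations compose to $1+\epsilon$) gives $d\,T_{\varphi,M,w_0}(R)\le C_1'\,T_{\varphi,M}((1+\epsilon)r)+C_2'$. For the second, rescaling $s$ so that $\|s\|^{M^d}_{\sup,X}=1$ (a change that does not affect $n_g(r)$) forces $\|\varphi^\ast(s)\|\le 1$ pointwise, so $\int_{S_R}\log\|\varphi^\ast(s)\|\,\mu_{w_0,R}\le 0$. For the last term, I would invoke a compactness argument: the continuous functional $s\mapsto\max_{|z|\le r_0}\|\varphi^\ast(s)\|(z)$ is strictly positive on the compact unit sphere $\{\|s\|_{\sup}=1\}\subset H^0(X,M^d)$, provided no nonzero section of $M^d$ pulls back to the zero function along $\varphi$ (a non-degeneracy condition automatic when $\varphi(\C)$ is Zariski dense and implicit throughout the paper). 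Hence there is a constant $c_0=c_0(\varphi,M,d)>0$ with $\|\varphi^\ast(s)\|(w_0)\ge c_0/2$, yielding $-\log\|\varphi^\ast(s)\|(w_0)\le\log 2-\log c_0$. Combining the three bounds and dividing by $c(\epsilon)$ then gives $n_g(r)\le C_1\,T_{\varphi,M}((1+\epsilon)r)+C_2$ as required.

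The main obstacle I expect is the third step: the first two are routine analytic manipulations, but the uniformity in $s$ rests on the compactness of the finite-dimensional unit sphere together with the non-degeneracy of $\varphi$ relative to $|M^d|$. If $\varphi(\C)$ happens to be contained in a hypersurface of this linear system, one would first pass to the Zariski closure of $\varphi(\C)$ and apply the argument there with the restricted line bundle.
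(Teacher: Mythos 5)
Your proposal is correct and follows essentially the same route as the paper's own proof: pick a base point $w_0$ in a fixed small disk $\Delta_{r_0}$ at which $\varphi^\ast(s)$ is bounded below uniformly in $s$ (your compactness-of-the-unit-sphere argument is logically identical to the paper's comparison of the two norms $\Vert\cdot\Vert_{\infty,X}$ and $\Vert\cdot\Vert_{\infty,r_0}$ on the finite-dimensional space $H^0(X,M^d)$), then apply the shifted First Main Theorem \eqref{FMTw} on a slightly larger disk, use the uniform lower bound $g_{(1+\epsilon')r}(w_0,z)\ge c(\epsilon)$ for $z\in\Delta_r$, and finally convert $T_{\varphi,M,w_0}$ back to $T_{\varphi,M}$ via Proposition \ref{easyFMT}. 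The only cosmetic difference is that you pass through homogeneous coordinates to count zeros of an auxiliary entire function $g$, which is an unnecessary detour since the paper's FMT is already phrased for sections, but it does no harm.
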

\begin{proof} We may suppose that the image of $\varphi$ is Zariski dense. Fix a small $r_0>0$. On the vector space $H^0(X, M^d)$ we have two norms: $\Vert s\Vert_{\infty, X}:=\sup_{z\in X}\{\Vert s\Vert(z)\}$ and $\Vert s\Vert_{\infty, r_0}:=\sup_{z\in \Delta_{r_0}}\{\Vert s\Vert(\varphi(z))\}$.  Since $H^0(X, M^d)$ is finite dimensional, these two norms are comparable, thus there are constants $B_i$ such that, for every $s\in H^0(X, M^d)$ we have
\begin{equation}
B_1\cdot \Vert s\Vert_{\infty, X}\leq \Vert s\Vert_{\infty, r_0}\leq B_2 \cdot \Vert s\Vert_{\infty, X}.
\end{equation}

In the sequel of the proof we may suppose that $r_0<{{\epsilon}\over{2}}\cdot r$. Since the function $\vert {{(1+\epsilon)r(z-w)}\over{(1+\epsilon)^2r^2-z\overline w}}\vert$ is a distance on the disk $\Delta_{(1+\epsilon)r}$ (bounded above by 1), there is a positive constant $C=C(\epsilon)$, depending only on $\epsilon$ such that, for every $w\in\Delta_{r_0}$ and $z\in\Delta_r$ we have 
\begin{equation}\label{estimate g}
g_{(1+\epsilon)r}(z,w)\geq C.
\end{equation}
Let $s\in H^0(X, M^d)\setminus\{0\}$. In order to estimate $\deg(\varphi^\ast(s))_r$, we may suppose that $\Vert s\Vert_{\infty, X}=1$. Consequently, there is a point $w_0\in\Delta_{r_0}$ such that $\Vert \varphi^\ast(s)\Vert(w_0)\geq B_1$.  Now we can apply the First Main Theorem for $w_0$ \ref{FMTw}  and obtain, using estimate \ref{estimate g}  for $\epsilon_1<\epsilon$,
 
\begin{equation}
\log(B_1)+ C\cdot \deg(\varphi^\ast(s))_r\leq T_{\varphi, M,w_0}((1+\epsilon_1)r)
\end{equation}
Proposition \ref{easyFMT} allows to conclude the proof.\end{proof}

 The case of meromorphic functions is more involved. In this case we cannot compare the order of growth with the supremum of the functions on a disk. Thus we will rely on this, classical estimate by Cartan, one proof of it can be found in (\cite{Le} Theorem 3 page. 76).
 \begin{theorem}\label{cartan} (Cartan) Let $\mu$ be a Borel measure over $\C$ of finite mass: $\mu(\C)=M$. We consider the following function
 \begin{equation}
 V(z):=\int_\C\log\vert z-\zeta\vert\mu(\zeta).
 \end{equation}
 Let $H$ be a real number such that $0<H<1$. Then
 \begin{equation}
 V(z)>M\log(H)
 \end{equation}
 outside a set $E$ of disks the sum of whose radii is less than $5H$.
 \end{theorem}
 
 In the case of projective varieties we obtain the following Theorem:
 
\begin{theorem}\label{generalFMT1} Under the hypotheses above, suppose, moreover, that $X$ is {\rm a projective variety}. Then, for every $r>0$ and $w_0$ such that $\vert w_0\vert<r$  and  which is outside an exceptional set $E_r$ which is union of disks the sum of whose radii is less than ${{5}\over {T_{\varphi, M}(r)}}$, we have that
\begin{equation}
\sum_{\vert z\vert<r}v_z(s)\cdot g_r(w_0,z)+\log\Vert s\Vert(w_0)\leq \log\Vert s\Vert_{r}+(\log(T_{\varphi, M}(r))+\log(r)+\log(2))\cdot T_{\varphi, M}(e\cdot r).
\end{equation}
\end{theorem}

If one reads carefully the proof of Theorem \ref{cartan} in \cite{Le}, one sees that, if $r'>r$ then $E_{r'}\cap \Delta_r\subseteq E_r$. We think that, if $r$ is fixed, then, for $r'\gg r$, we have that $E_{r'}\cap\Delta_r=\emptyset$.

\begin{rem} It would be interesting to show that, given $r$, the value of the  smallest $r'$ such that $E_{r'}\cap\Delta_r=\emptyset$ depends only on $T_{\varphi,M}(r)$. \end{rem}

\begin{proof}  As in the affine case, the proof relies on  Formula \ref{FMTw} and  Proposition \ref{comparing T1} below. \end{proof}

\begin{theorem}\label{comparing T1} Under the hypotheses of Theorem \ref{generalFMT1} we can find an exceptional set $E_r\subset\Delta_r$ which is union of discs the sum of whose radii is less than ${{5}\over {T_{\varphi, M}(r)}}$, such that, if $w_0\in\Delta_r$ is outside it, then
\begin{equation}
T_{\varphi, M, w_0}(r)\leq (\log(T_{\varphi, M}(r))+\log(r)+\log(2))\cdot T_{\varphi, M}(e\cdot r).
\end{equation}
\end{theorem}

\begin{proof} We start by rewriting the characteristic function in a different way. We fix $w_0\in\Delta_r$ and denote by $G_r(z)$ the function $\left\vert{{r(z-w_0)}\over{r^2-z\overline w_0}}\right\vert$.

Let $B[r]:=\{ (z,y)\in \C\times [0;1] \; :\; \vert z\vert<y\}$, then, by Fubini Theorem and denoting by $p_1:B(r)\to\C$ et $p_2:B(R)\to [0;1]$ the natural projections, we have that 

\begin{equation}
 \begin{array}{rcl}
 T_{\varphi, M, w_0}(r)=&\int_0^1{{dt}\over{t}}\int_{G_r(z)<t}\varphi^\ast(c_1(M))\\
  =&\int_{B(r)}p_2^\ast({{dt}\over{t}})\wedge p_1^\ast(\varphi^\ast(c_1(M)))\\
  =&\int_{\Delta_r}\varphi^\ast(c_1(M))\int_{G_r(z)}^1{{dt}\over{t}}\\
  =&\int_{\Delta_r}g_r(w_0,z)\varphi^\ast(c_1(M)).\\
 \end{array}
 \end{equation}
 
 We will now use the following inequalities:
 \begin{itemize}
 \item[--] For every $w_0$ and $z$ in $\Delta_r$ we have $g_r(w_0, z)<g_{er}(w_0, z)$;
 \item[--] For every $z\in\Delta_r$ we have $\log{{er}\over{\vert z\vert}}>1$;
 \item[--] For every $w_0$ and $z$ in $\Delta_r$ we have $g_{er}(w_0 ,z)\leq \log\left({{(r(e+1)}\over{\vert z-w_0\vert}})\right)$.
 \end{itemize}
Thus we obtain
 \begin{equation}
 \begin{array}{rlc}
T_{\varphi, M, w_0}(r) =&\int_{\Delta_r}g_r(w_0,z)\varphi^\ast(c_1(M))\\
\leq&\int_{\Delta_r}g_{er}(w_0 , z)\log{{er}\over{\vert z\vert}}\varphi^\ast(c_1(M))\\
\leq& \int_{\Delta_r}\log(r(e+1))\log{{er}\over{\vert z\vert}}\varphi^\ast(c_1(M))-\int_{\Delta_r}\log(\vert z-w_0\vert)\log{{er}\over{\vert z\vert}}\varphi^\ast(c_1(M)).\\
\end{array}
\end{equation}
We have that 
\begin{equation}
 \int_{\Delta_r}\log(r(e+1))\log{{er}\over{\vert z\vert}}\varphi^\ast(c_1(M))\leq(2+\log(r))T_{\varphi, M}(er).
 \end{equation}
 Now we apply Theorem \ref{cartan} with $\mu(z)=\log^+{{er}\over{\vert z\vert}}\varphi^\ast(c_1(M))$ (where $\log+(x):=\sup(0,\log(x)$)) and $H={{1}\over{T_{\varphi, M}(r)}}$ consequently,
since $\int_{\Delta_r}\log{{er}\over{\vert z\vert}}\varphi^\ast(c_1(M))\leq T_{\varphi, M(er)}$,  we obtain that
\begin{equation}
\int_{\Delta_r}\log(\vert z-w_0)\vert\log{{er}\over{\vert z\vert}}\varphi^\ast(c_1(M))\geq -T_{\varphi, M}(er)\cdot\log(T_{\varphi,M}(r))
\end{equation}
for $w_0\in \Delta_r$ and outside an exceptional set $E_r$ which is union of disks the sum of whose radii is less than ${{5}\over {T_{\varphi, M}(r)}}$. The conclusion follows because we have that 
\begin{equation}
 \int_{\Delta_r}\log(r(e+1))\log{{er}\over{\vert z\vert}}\varphi^\ast(c_1(M))\leq(2+\log(r))T_{\varphi, M}(er).
 \end{equation}.
\end{proof}

\

\section{The general distribution of rational points in an expanding domain}

\

Let $X$ be either an affine or a projective variety defined over a number field $K$ with an ample line bundle $M$ equipped with a positive metric (in the affine case we suppose that $M$, and its metric, is the restriction of an hermitian line bundle on a smooth projective compactification of $X$). We will suppose that $M$ defines an arithmetic polarization of a a smooth projective compactification of $X$. We fix an embedding $\sigma :K\hookrightarrow\C$ and we denote by $X_\sigma$ the complex variety deduced from $X$ via the change of base $\sigma$. 

We fix an analytic map $\varphi:\C\to X_\sigma(\C)$ with Zariski dense image.

If $X$ is projective, for every $ r> 0$, let $E_r\subset\Delta_r$ be the exceptional set introduced in Theorem \ref{comparing T1}. We recall that $E_r$ is a union of disks whose sum of radii is less than ${{5}\over{T_{\varphi, M}(r)}}$. If $X$ is affine, we put $E_r=\emptyset$.

For every positive numbers $r$ and $H$, we define
\begin{equation}
S_\varphi(r,H):=\left\{ w\in \Delta_r\setminus E_r \;/\; \varphi(w)\in X(K) {\; \rm and\;} h_M(\varphi(w))\leq H\right\}.
\end{equation}
And we denote by $C_\varphi(r,H)$ its cardinality. 

In this section we will prove a generalization of the classical Bombieri - Pila Theorem \cite{BOMBIERIPILA} to expanding domains of the affine line. In the paper \cite{gas1} we proved a version of the Bombieri Pila Theorem for a relatively compact open set in a Zariski dense Riemann surface inside a projective variety. Although Bombieri Pila Theorem have been the starting point of a wide generalization on rational points of $O$--minimal sets, it, and its generalizations, cannot be applied, for instance, to  estimate the number of rational points on Riemann surfaces which are dense (for the Zariski topology) inside the variety. Here we will study the case of entire curves inside the varieties. We will find a theorem which is the analogue of Bombieri Pila's in this case.

As explained in the previous session, Schwarz Lemma for points which are near to the border of a disk has (at the moment) a different shape in the affine and in the projective case. For this reason, we will state two versions of the theorem, one in the affine and the other is the projective case. The proofs are very similar, the only difference will be that in one case we apply Proposition \ref{comparing T} and in the other we apply Theorem \ref{comparing T1}.

The affine version of the Theorem is:
\begin{theorem}\label{BPaffine} Suppose that $X$ is an affine  variety  of dimension $n>1$ equipped with an hermitian line bundle $M$ as above. Let $\varphi:\C\to X_\sigma(\C)$ be an analytic map with Zariski dense image. Let $\epsilon>0$. Then
\begin{equation}
C_\varphi(r,H)\ll T_{\varphi, M}((2+\epsilon)r)\cdot\exp(\epsilon(H+T_{\varphi, M}(1+\epsilon)r))
\end{equation} 
where the constants involved in $\ll$ depend only on $X$, $M$, $\epsilon$ and $\varphi$ but they are independent on $H$ and $r$.
\end{theorem}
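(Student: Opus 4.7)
The plan is to adapt the Bombieri--Pila argument of \cite{gas1} to expanding disks by substituting the classical Schwarz lemma with the tools of Section \ref{basepoints}: the affine generalized First Main Theorem (Theorem \ref{generalFMT}) together with Proposition \ref{comparing T} let one apply the FMT at a base point $w_0\in\Delta_r$ which may be close to $\partial\Delta_r$ while still controlling everything in terms of $T_{\varphi,M}$ computed at the origin.

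Set $N:=C_\varphi(r,H)$ with rational points $w_1,\dots,w_N$. Choose a positive integer $d$ (to be optimized), let $V_d:=H^0(\mathcal{X},\mathcal{L}^d)$ of $O_K$--rank $m\sim c_n d^n$, pick a subset $I\subseteq\{1,\dots,N\}$ of size $N_0\le\lfloor m/2\rfloor$, and apply Siegel's Lemma (Lemma \ref{siegel}) to the evaluation map $V_d\to\bigoplus_{i\in I}\varphi(w_i)^*(\mathcal{L}^d)$. Arithmetic very ampleness provides generators of sup--norm $\le 1$, while $\widehat{\mu}_{\max}$ of the target is at most $[K:\Q]\,dH$, so Siegel produces a nonzero $s\in V_d$ vanishing at every $w_i$, $i\in I$, with $\log\|s\|_{\sup}\le c_K\,dH+O(\log d)$. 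Since $\varphi$ is Zariski dense, $\varphi^*(s)\not\equiv 0$; in the generic case one then selects a rational $w_0\in\{w_1,\dots,w_N\}\setminus\{w_i:i\in I\}$ with $s(\varphi(w_0))\neq 0$, while the degenerate case in which $s$ vanishes at every rational point is handled separately by Proposition \ref{zeros} (this is what produces the extra factor $T_{\varphi,M}((2+\epsilon)r)$ in the final statement).

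Apply Theorem \ref{generalFMT} to $\varphi^*(s)$ at base point $w_0$ on $\Delta_R$, $R=(1+\epsilon)r$, together with the Liouville inequality (\ref{liouville1}) at the rational point $\varphi(w_0)$ of height $\le H$ and the Siegel bound on $\|s\|_\sup$, to obtain
\[
\sum_{i\in I}g_R(w_0,w_i)\ \le\ C\bigl(dH+d\,T_{\varphi,M}((2+\epsilon)r)\bigr).
\]
Each Green--function term is bounded below by the worst--case value $c_\epsilon=\log\tfrac{R^2+r^2}{2Rr}=\Theta(\epsilon^2)$, the minimum of $g_R$ over $\bar\Delta_r\times\bar\Delta_r$, attained at antipodal boundary points. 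Combined with $N_0\sim d^n/2$ and a choice of $d$ so that $m(d)>N$, this converts to a polynomial bound $N\lesssim(H+T_{\varphi,M}((2+\epsilon)r))^{n/(n-1)}/\epsilon^{2n/(n-1)}$, which \emph{a fortiori} implies the exponential--form estimate claimed.

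The main obstacle is the smallness of $c_\epsilon=\Theta(\epsilon^2)$, which costs an $\epsilon^{-2}$ loss in the final bound; the delicate bookkeeping between the successive radii $r$, $(1+\epsilon)r$, $(1+\epsilon)^2 r$ that enter through Theorem \ref{generalFMT} and Proposition \ref{comparing T}, combined with the invocation of Proposition \ref{zeros} in the degenerate case, is precisely what produces the characteristic function arguments $(1+\epsilon)r$ and $(2+\epsilon)r$ appearing in the statement.
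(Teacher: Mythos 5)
Your plan takes a genuinely different route from the paper's, and the route as written cannot work: it would yield a polynomial bound $C_\varphi(r,H)\lesssim\bigl((H+T_{\varphi,M}((2+\epsilon)r))/\epsilon^2\bigr)^{n/(n-1)}$ unconditionally, whereas the paper explicitly records (citing Surroca and Boxall--Jones at the start of Section \ref{polynomial bounds}) that the exponential growth in Theorem \ref{BPaffine} is optimal, and Theorem \ref{poly bound thm} recovers polynomial bounds only along an $A$--wide subset of the $(T,H)$--plane, not everywhere. So one of your steps must fail, and it is the treatment of the ``degenerate case.''

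Concretely: you apply Siegel once, producing a single section $s\in H^0(X,M^d)$ with $d$ chosen so that $h^0(X,M^d)\sim N$ (hence $d\sim N^{1/n}$), vanishing at $N_0\sim d^n/2$ of the rational points, and then you apply Theorem \ref{generalFMT} at a rational $w_0$ where $s$ is nonzero. When such a $w_0$ exists, your uniform lower bound $g_R(w_0,w_i)\ge c_\epsilon=\log\tfrac{R^2+r^2}{2Rr}\sim\epsilon^2/2$ (correctly identified as the worst case at antipodal boundary points of $\Delta_r$ inside $\Delta_R$, $R=(1+\epsilon)r$) gives $\epsilon^2 N_0\lesssim dH+T$ and the polynomial bound. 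But when $s$ vanishes at \emph{every} $w\in S_\varphi(r,H)$ you fall back on Proposition \ref{zeros}, whose constants $C_1,C_2$ depend on $d$. In the proof of Proposition \ref{zeros} this dependence enters through the norm-comparison constant $B_1(d)$ between $\Vert\cdot\Vert_{\infty,X}$ and $\Vert\cdot\Vert_{\infty,r_0}$ on $H^0(X,M^d)$; this constant is obtained purely from finite-dimensionality and is in general exponentially small in $d$. Since you take $d\sim N^{1/n}$, the degenerate bound $N\le C_1(d)T+C_2(d)$ becomes vacuous. The degenerate case is exactly what occurs in the examples that make the exponential bound sharp, so it cannot be waved away, and nothing in the Siegel construction guarantees the generic case occurs for any admissible $d$.

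The paper avoids this by never letting $d$ grow: $d$ is fixed once and for all as a function of $\epsilon$ (so the constants in Proposition \ref{zeros} are absolute), and instead $\Delta_r$ is covered by $\sim\exp\bigl(\epsilon(H+T_{\varphi,M}((1+\epsilon)r))\bigr)$ hyperbolic disks $W_i$ of pseudo-hyperbolic diameter $\exp\bigl(-(H+T)/d^{n-1}\bigr)$ (Lemmas \ref{covering} and \ref{small w affine}). Inside each $W_i$, the Green functions $g_{r_1}(w_0,w_j)$ between any two rational points are bounded below by $(H+T)/d^{n-1}$, which is enormous compared with your $\Theta(\epsilon^2)$. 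This is precisely the leverage that turns the First Main Theorem plus Liouville into a \emph{contradiction}, so that the Siegel section is \emph{forced} to vanish at all rational points of $W_i$; there is no generic/degenerate dichotomy. Then Proposition \ref{zeros} with its $d$-independent constants bounds $\vert S_\varphi(r,H)\cap W_i\vert$ by $O(T_{\varphi,M}((1+\epsilon)r))$, and multiplying by the number of covering disks gives the stated estimate. The covering is where the exponential loss occurs, but it is also what makes the argument sound. Your scheme is actually closer to the paper's proof of Theorem \ref{poly bound thm}, which likewise uses a single Siegel section and the $\Theta(\epsilon^2)$ Green-function bound, but there the argument is by contradiction along a subgeometric sequence of $(r_n,H_n)$, with $d_1$ fixed at the start so that the application of Proposition \ref{zeros} has controlled constants; that is exactly the device your proposal lacks.
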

The projective version of the Theorem is:
\begin{theorem}\label{BPprojective} Suppose that $X$ is projective variety of dimension $n>1$ equipped with an hermitian line bundle $M$ as above. Let $\varphi:\C\to X_\sigma(\C)$ be an analytic map with Zariski dense image. Let $\epsilon>0$. Then
\begin{equation}
C_\varphi(r,H)\ll T_{\varphi, M}((2+\epsilon)r)\cdot\exp(\epsilon(H+\log(T(r))\cdot T_{\varphi, M}(er)))
\end{equation} 
where the constants involved in $\ll$ depend only on $X$, $M$, $\epsilon$ and $\varphi$ but they are independent on $H$ and $r$.
\end{theorem}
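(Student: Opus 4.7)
The plan is to adapt the Bombieri--Pila--Schneider--Lang auxiliary-polynomial method, using Theorem \ref{generalFMT1} in place of the usual Schwarz lemma on a compact subdisk. Let $N := C_\varphi(r, H)$ and enumerate $S_\varphi(r, H) = \{w_1, \ldots, w_N\}$. Set $R := (2+\epsilon)r$, so that $\Delta_r \subset \Delta_{R/2}$; a direct Möbius/pseudohyperbolic computation shows that for all $w_0, z \in \Delta_r$ the Green function satisfies
\[
g_R(w_0, z) \ge c(\epsilon) > 0.
\]
The essential observation making Theorem \ref{generalFMT1} applicable on $\Delta_R$ at every $w_0 \in S_\varphi(r, H)$ is the inclusion $E_R \cap \Delta_r \subseteq E_r$ noted right after Theorem \ref{comparing T1}, so that such a $w_0$ automatically lies outside $E_R$.

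Next, set $m_d := \mathrm{rk}_{O_K} H^0(\cX, \cL^d) \sim c_1 d^n$ and $N' := \lfloor m_d / 2 \rfloor$. Consider the $O_K$-linear evaluation
\[
\mathrm{ev} : H^0(\cX, \cL^d) \longrightarrow \bigoplus_{i=1}^{N'} P_i^*(\cL^d),
\]
with $P_i : \Spec(O_K) \to \cX$ the integral extension of $\varphi(w_i) \in X(K)$. By arithmetic very ampleness the source is generated by sections of sup-norm $\le 1$, the operator norms $\|\mathrm{ev}\|_\sigma$ are bounded by $1$ (sup-norm on the target), and $\widehat{\mu}_{\max}$ of the target is $\le [K:\Q]\, dH$. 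Siegel's Lemma \ref{siegel} then yields a nonzero $s \in \ker(\mathrm{ev})$ with $\log^+\|s\| \le [K:\Q]\, dH + O(n \log d)$. Since $\varphi(\C)$ is Zariski dense, $\varphi^*(s) \not\equiv 0$, and by Proposition \ref{zeros} its zeros in $\Delta_R$ number at most $C_3\, d\, T_{\varphi, M}((1+\epsilon)R) + C_4$; provided $N$ exceeds $N'$ plus this count, we can pick $w_0 \in S_\varphi(r, H) \setminus \{w_1, \ldots, w_{N'}\}$ with $\varphi^*(s)(w_0) \ne 0$.

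Applying Theorem \ref{generalFMT1} to $s \in H^0(X, M^d)$ on $\Delta_R$ at $w_0$ (whence the characteristic function is $T_{\varphi, M^d}(r) = d\, T_{\varphi, M}(r)$), keeping only the $N'$ terms $g_R(w_0, w_i) \ge c(\epsilon)$ on the left, and invoking $\log\|s\|_R \le \log^+\|s\|$ together with the Liouville inequality \eqref{liouville1}, one derives after substituting the Siegel estimate
\[
c(\epsilon)\, N' \le C_5\, dH + C_6\, n \log d + d\, T_{\varphi, M}(eR)\bigl(\log(d\, T_{\varphi, M}(R)) + \log R + \log 2\bigr).
\]
Writing $Y := H + \log T_{\varphi, M}(r) \cdot T_{\varphi, M}(er)$ and using $N' \asymp d^n$, this becomes $d^{n-1} \ll Y$ up to logarithmic corrections absorbed by rescaling $\epsilon$ at the outset; hence the setup becomes contradictory once $d$ exceeds a constant multiple of $Y^{1/(n-1)}$. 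Taking $d$ at that threshold forces
\[
N \le N' + C_3\, d\, T_{\varphi, M}((1+\epsilon)R) + C_4 \ll Y^{n/(n-1)} + T_{\varphi, M}((2+\epsilon)r)\, Y^{1/(n-1)},
\]
and both summands are dominated by $T_{\varphi, M}((2+\epsilon)r)\, \exp(\epsilon Y)$, which is the claimed bound.

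The hard part is the bookkeeping: (i) verifying the uniform Green-function lower bound $c(\epsilon) > 0$ via the pseudohyperbolic metric; (ii) tracking the linear-in-$d$ scaling of the constants in Proposition \ref{zeros} (and confirming that its hypotheses apply in the projective case here); and (iii) rescaling $\epsilon$ at the beginning so the various radii $(1+\epsilon)R$, $eR$, $R$ appearing in the derivation all fit inside the theorem's $(2+\epsilon)r$, $er$, $r$ after absorbing the extra multiplicative factors into $\exp(\epsilon \cdot)$. The delicate balance is between the $d^n$ growth from Siegel and the linear-in-$d$ growth from the zero-count, which produces the exponent $n/(n-1)$ ultimately swallowed by the exponential factor.
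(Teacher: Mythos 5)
Your approach is genuinely different from the paper's, and it contains a real gap. The paper proves Theorem~\ref{BPprojective} by a covering argument: it \emph{fixes} the degree $d$ once and for all (depending only on $\epsilon$ so that $d^{n-1}\epsilon>1$), covers $\Delta_r$ by roughly $\exp\!\big(2\epsilon(T_{\varphi,M}((1+\epsilon)r)+H)\big)$ small disks $W_i$ of pseudohyperbolic diameter $\exp\!\big(-(T+H)/d^{n-1}\big)$ via Lemma~\ref{covering}, builds a Siegel section of degree $d$ in each $W_i$ vanishing on $S_\varphi(r,H)\cap W_i$ (Lemma~\ref{small w proj}), and then bounds the contribution of each disk by the zero count from Proposition~\ref{zeros}. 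The crucial point, which the paper explicitly flags in the proof of Theorem~\ref{BPaffine} (``remark that $d$ is fixed and depends only on $\epsilon$''), is that Proposition~\ref{zeros} is only invoked with one fixed degree. You instead make a single global Siegel construction with $d$ growing like $Y^{1/(n-1)}$ and then invoke Proposition~\ref{zeros} at this variable degree. Proposition~\ref{zeros} says its constants $C_1,C_2$ are independent of the section $s$, but it says nothing about how they grow in $d$; its proof rests on the comparison constant $B_1(d)$ between $\|\cdot\|_{\sup,X}$ and the sup-norm on $\varphi(\overline{\Delta}_{r_0})$ over $H^0(X,M^d)$, and because for $n>1$ the arc $\varphi(\overline{\Delta}_{r_0})$ is pluripolar, $-\log B_1(d)$ grows \emph{super}linearly in $d$ and at a rate controlled only by the transcendence measure of $\varphi$ — it can be made arbitrarily fast. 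So the step ``$N \le N' + C_3\,d\,T_{\varphi,M}((1+\epsilon)R)+C_4$'' with $C_3,C_4$ linear in $d$ is not justified.

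This is not a bookkeeping technicality: the bound your argument would yield, $N\ll Y^{n/(n-1)}+T\,Y^{1/(n-1)}$, is \emph{polynomial} in $T$ and $H$, and if it held for all $(r,H)$ it would contradict the discussion in Section~\ref{polynomial bounds}, where the examples of Surroca and Boxall--Jones are cited to show that the growth of $C_\varphi(r,H)$ cannot be bounded below exponential in $T+H$ in general. This is also precisely why Theorem~\ref{poly bound thm} only delivers polynomial bounds on an $A$-wide set of pairs rather than everywhere: its proof, which is the closest relative of your argument, uses a \emph{fixed}-degree section $s$ of degree $d_1$ determined by the first element of a subgeometric sequence and propagates its vanishing by induction; the degree never grows, so the zero-count constants never degrade, and the final contradiction comes from the fact that a fixed-degree section cannot have arbitrarily many zeros. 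Your single-shot version loses this control the moment $d$ is allowed to scale with $Y$. (Secondary issues: applying Theorem~\ref{generalFMT1} at radius $R=(2+\epsilon)r$ produces the characteristic function at $eR=e(2+\epsilon)r$, not $er$; and the exceptional set governing Theorem~\ref{generalFMT1} is the one attached to $M^d$ and radius $R$, which one must compare to the $E_r$ that defines $S_\varphi(r,H)$. These are plausibly fixable by rescaling $\epsilon$, but the degree-dependence of Proposition~\ref{zeros} is the substantive obstruction.)
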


\begin{rem} Observe that if,  in Theorems \ref{BPaffine} and \ref{BPprojective} above, we fix $r$, we find the version of the Bombieri--Pila Theorem given in \cite{gas1}.
\end{rem}

Before we start the proof, we need to introduce some other notation. We fix $r>0$ big enough and we denote by $r_1$ the number $(1+\epsilon)r$. We recall that the function
$d_{r_1}(z,w):=\vert{{r_(z-w)}\over{r_1^2-z\overline w}}\vert$ is a distance on $\Delta_{r_1}$. If $W\subset \Delta_r$ is a subset, we will denote by $diam_{r_1}(W)$ the number
$\sup\{ d_{r_1}(z,w)\; /\; z,w\in W\}$.

If $W\subset\Delta_r$, and $H>0$ we will denote by  $C_\varphi(r,H,W)$ the cardinality of $S_\varphi(r,H)\cap W$.

The key step of the proof of the Theorems above are the following Lemmas

\begin{lemma}\label{small w affine} Fix $d_0$ a sufficiently big integer. For every positive integer $d\geq d_0$, suppose, in the hypothesis of Theorem \ref{BPaffine}, that $W\subseteq\Delta_r$ is such that 
\begin{equation}
diam_{r_1}(W)\leq \exp\left(-{{T_{\varphi,M}((1+\epsilon)r)+H}\over{ d^{n-1}}}\right).
\end{equation}
Then, we can find a non zero section $s\in H^0(X,M^d)$ vanishing on every $w\in S_\varphi(r,H)\cap W$.\end{lemma}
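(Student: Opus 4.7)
The plan is to follow the classical Bombieri--Pila scheme adapted to Arakelov geometry on an entire curve: build an auxiliary global section by Siegel's lemma, bound its norm at the candidate rational points via a Schwarz-type estimate coming from the First Main Theorem, and then upgrade this smallness to exact vanishing by Liouville's inequality.

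First I would fix a base point $w_0\in W$ (for instance a preimage in $W$ of some rational point of $S_\varphi(r,H)\cap W$, the lemma being trivial if this intersection is empty) and set $N=\lfloor c\cdot d^n\rfloor$, with $c$ strictly smaller than the leading coefficient $c_0$ of the Hilbert polynomial $\dim H^0(\cX,\cL^d)\sim c_0 d^n$. I would then apply the arithmetic Siegel Lemma (Lemma~\ref{siegel}) to the $O_K$-linear ``Taylor coefficients at $w_0$'' map
\[
\Phi_N : H^0(\cX,\cL^d) \longrightarrow E_2,
\]
where $E_2$ is a rank-$N$ hermitian $O_K$-module encoding the first $N$ Taylor coefficients of $\varphi^*(s)$ at $w_0$, its hermitian structure at the chosen place $\sigma$ being normalised by Cauchy's estimates on a fixed small disk around $w_0$. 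Since $\overline\cL$ is arithmetically very ample, $H^0(\cX,\cL^d)$ is generated by sections of sup-norm $\leq 1$; a control on the operator norm of $\Phi_N$ and on $\widehat\mu_{\max}(E_2)$ should yield a nonzero $s\in H^0(\cX,\cL^d)$ with $\mathrm{ord}_{w_0}(\varphi^*(s))\geq N$ and $\log\Vert s\Vert_{\sup}\leq B\cdot d$ for a constant $B$ depending only on $X$, $M$, $K$.

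Next, for every $w\in W$, I would apply the First Main Theorem \ref{FMTw} on the disk $\Delta_{r_1}$ with base point $w$. Since $\varphi^*(s)$ vanishes to order $\geq N$ at $w_0$,
\[
N\cdot g_{r_1}(w,w_0)+\log\Vert s\Vert(\varphi(w))\leq \log\Vert s\Vert_{r_1}+T_{\varphi,M^d,w}(r_1).
\]
Using Proposition~\ref{comparing T} to dominate the characteristic term by $A\,d\,(T_{\varphi,M}((1+\epsilon)^2 r)+1)$, together with the identity $g_{r_1}(w,w_0)=-\log d_{r_1}(w,w_0)$ and the diameter hypothesis $g_{r_1}(w,w_0)\geq (T+H)/d^{n-1}$, and combining with the sup-norm bound from the first step, one obtains
\[
\log\Vert s\Vert(\varphi(w))\leq O(d)+A\,d\,T-c\,d\,(T+H).
\]
If $w\in S_\varphi(r,H)\cap W$ and $s(\varphi(w))\neq 0$, Liouville's inequality \ref{liouville1} supplies the lower bound $\log\Vert s\Vert(\varphi(w))\geq -[K:\Q]\,d\,(H+B)$. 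Choosing $c>\max(A,[K:\Q])$ and $d\geq d_0$ sufficiently large makes the two bounds incompatible; hence $s(\varphi(w))=0$ for every $w\in S_\varphi(r,H)\cap W$.

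The hard part will be the first step: equipping the target $E_2$ of the Taylor map with a hermitian $O_K$-structure for which both the operator norm of $\Phi_N$ and $\widehat\mu_{\max}(E_2)$ remain of order $O(d)$, despite $w_0$ being a generic complex (not $K$-rational) point, so that the individual Taylor coefficients are transcendental numbers. Cauchy estimates on a disk of fixed radius around $w_0$ contained in $\Delta_{r_1}$ should provide the required normalisation, after which Siegel's lemma delivers $\log\Vert s\Vert_{\sup}=O(d)$ and the Schwarz--Liouville comparison above proceeds routinely.
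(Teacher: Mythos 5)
The paper's proof takes a genuinely different route. Instead of applying Siegel's lemma to a Taylor-jet map at a complex base point $w_0\in W$, it applies Siegel to the $O_K$-linear \emph{evaluation} map
$\delta_H: H^0(\cX,\cM^d)\to\bigoplus_{z\in Z_W(d)}\cM^d\vert_{\varphi(z)}$,
where $Z_W(d)$ is a subset of the rational points $S_\varphi(r,H)\cap W$ of cardinality $A\sim (1-\alpha)h^0(X,M^d)$. Because these points are $K$-rational, each fiber $\cM^d\vert_{\varphi(z)}$ is an honest hermitian $O_K$-line of slope controlled by $d\cdot h_M(\varphi(z))\leq dH$, so Siegel produces a nonzero $s$ with $\log\Vert s\Vert_{\sup}\leq C_3 dH$ that already vanishes at all points of $Z_W(d)$; the Schwarz--Liouville comparison is then used only to propagate vanishing to the remaining points of $S_\varphi(r,H)\cap W$.

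Your approach has a genuine gap that Cauchy estimates cannot repair. The source $H^0(\cX,\cL^d)$ is an $O_K$-module, but the target of the Taylor-coefficients map at a transcendental point $w_0$ is not: the numbers $\partial^k(\varphi^\ast s)(w_0)$ are transcendental, the map is not $O_K$-linear, and there is no natural $O_K$-lattice in the jet space at $w_0$. This is not a question of choosing a suitable hermitian metric on $E_2$ --- which is all that Cauchy estimates would supply --- but of the total absence of an integral structure on the target, so the hypotheses of Lemma \ref{siegel} simply fail. (Even if you pick $w_0$ with $\varphi(w_0)\in X(K)$, the higher jets involve derivatives of $\varphi$, which are again transcendental for a nonalgebraic $\varphi$.) The expected bound $\log\Vert s\Vert_{\sup}=O(d)$ in your sketch, independent of $H$, is a symptom of the same problem: the arithmetic hypothesis $h_M(\varphi(w))\leq H$ must enter during the construction of $s$, and in the correct argument it does so through $\widehat\mu_{\max}$ of the evaluation target, yielding $\log\Vert s\Vert_{\sup}=O(dH)$, not $O(d)$.
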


And similarly, in the projective case, we have:

\begin{lemma}\label{small w proj} Fix $d_0$ a sufficiently big integer. For every positive integer $d\geq d_0$, suppose, in the hypothesis of Theorem \ref{BPprojective}, that $W\subseteq\Delta_r$ is such that 
\begin{equation}
diam_{r_1}(W)\leq \exp\left(- {{\log(T(r)\cdot T_{\varphi, M}(er)+H}\over{ d^{n-1}}}\right).
\end{equation}
Then, we can find a non zero section $s\in H^0(X,M^d)$ vanishing on every $w\in S_\varphi(r,H)\cap W$.\end{lemma}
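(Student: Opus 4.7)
The plan is to mimic the architecture of Lemma \ref{small w affine}, with Theorem \ref{comparing T1} replacing Proposition \ref{comparing T} wherever the characteristic function with a shifted base point needs to be controlled. First I would assume that $S_\varphi(r,H)\cap W$ is non-empty (else the lemma is vacuous) and fix a reference point $w_0\in S_\varphi(r,H)\cap W$. By definition $S_\varphi(r,H)$ is disjoint from the exceptional set $E_r$, so $w_0\notin E_r$, which is exactly the hypothesis needed to apply Theorem \ref{comparing T1} at $w_0$; moreover $p_0:=\varphi(w_0)\in X(K)$ with $h_M(p_0)\le H$.

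The core step is an application of Siegel's Lemma (Lemma \ref{siegel}) to produce a non-zero $s\in H^0(\cX,\cL^d)$ whose pull-back $\varphi^\ast s$ vanishes at $w_0$ to order $\tau:=\lfloor c_0 d^n\rfloor$, for a constant $c_0>0$ chosen so that the kernel of the corresponding jet map has rank at least half the source rank $\dim H^0(\cX,\cL^d)\sim (M^n/n!)\,d^n$. Arithmetic ampleness of $\overline{\cL}$ ensures the source is generated by sections of sup-norm at most $1$. The target $E_2$ is a hermitian $O_K$-bundle packaging the first $\tau$ Taylor coefficients of $\varphi^\ast s$ at $w_0$; its operator norm at $\sigma_0$ is bounded by Cauchy's integral formula on $\Delta_{r_1}$ with $r_1:=(1+\epsilon)r$, while its slope $\widehat{\mu}_{\max}(E_2)$ is controlled by combining the height contribution $h_\cL(p_0)\le H$ from $\cL^d$ with Theorem \ref{comparing T1} (applicable because $w_0\notin E_r$), which yields
\[
T_{\varphi,M,w_0}(r_1)\;\le\;\log\!\big(T_{\varphi,M}(r)\big)\cdot T_{\varphi,M}(er)+O(1).
\]
Lemma \ref{siegel} then furnishes $s$ satisfying
\[
\log\|s\|_{\sigma_0}\;\le\;A_1\cdot d\big(H+\log T_{\varphi,M}(r)\cdot T_{\varphi,M}(er)\big)+A_2,
\]
with $A_i$ depending only on $X$, $M$, $\varphi$, and $\epsilon$.

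To conclude, I would combine a Schwarz-type bound with Liouville's inequality. Since $\psi(z):=r_1(z-w_0)/(r_1^2-z\overline{w_0})$ is an automorphism of $\Delta_{r_1}$ with $\psi(w_0)=0$ and $|\psi|\le 1$ on $\Delta_{r_1}$, the function $\varphi^\ast s/\psi^\tau$ extends holomorphically to $\Delta_{r_1}$, so the maximum modulus principle gives, for every $w\in W$,
\[
\log|\varphi^\ast s(w)|\;\le\;\tau\log\mathrm{diam}_{r_1}(W)+\log\|s\|_{\sup,X},
\]
where the last term is bounded via Gromov's inequality \eqref{gromov} by the Siegel output plus $O(d)$. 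For $w\in S_\varphi(r,H)\cap W$ with $s(\varphi(w))\ne 0$, Liouville's inequality \eqref{liouville1} gives $\log|\varphi^\ast s(w)|\ge-[K:\Q](dH+\log^+\|s\|)$. Substituting $\tau=c_0d^n$ and the diameter hypothesis, the upper bound becomes at most $(A_1-c_0)d(H+\log T_{\varphi,M}(r)\cdot T_{\varphi,M}(er))+O(d)$; choosing $c_0$ sufficiently large relative to $A_1$ and $[K:\Q]$, and then enlarging $d_0$ to absorb lower-order terms, makes this strictly smaller than the Liouville lower bound. The ensuing contradiction forces $s(\varphi(w))=0$ for every such $w$.

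The main obstacle is the arithmetic organisation of $E_2$ and the tight slope estimate in the second step. Since $w_0$ is in general transcendental, the Taylor coefficients of $\varphi^\ast s$ at $w_0$ are not $K$-rational, so one has to realise $E_2$ as a genuine hermitian $O_K$-bundle, for instance by an interpolation-determinant argument as in \cite{gas1}. The projective setting also forces the slope bound to route through Theorem \ref{comparing T1} rather than the simpler Proposition \ref{comparing T}, and the Cartan-type proof of the former is precisely the source of the extra $\log T_{\varphi,M}(r)$ factor in the diameter hypothesis; the condition $w_0\notin E_r$ demanded by Theorem \ref{comparing T1} is exactly what is built into the definition of $S_\varphi(r,H)$ in the projective case.
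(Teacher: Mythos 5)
Your plan departs from the paper's in its central mechanism, and the departure creates two genuine gaps.

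\textbf{Different architecture.} The paper's proof of Lemma \ref{small w proj} (explicitly stated to be the same as Lemma \ref{small w affine}, with Theorem \ref{generalFMT1} replacing Theorem \ref{generalFMT}) imposes \emph{simple} vanishing of $\varphi^\ast(s)$ at $A\approx(1-\alpha)h^0(X,M^d)\sim d^n$ distinct points $\varphi(z)$ with $z\in Z_W(d)\subset S_\varphi(r,H)\cap W$; these are $K$-rational points of height $\le H$, so the target module $E(H)=\bigoplus_{z\in Z_W(d)}\cM^d|_{\varphi(z)}$ is a genuine hermitian $O_K$-module with $\widehat\mu_{\max}(E(H))\le dH$, and Siegel's Lemma applies cleanly. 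You instead impose vanishing to high order $\tau\approx c_0d^n$ at a single base point $w_0$. This is a Schneider-style argument, not the Bombieri--Pila one the paper uses, and it is genuinely harder to carry out in this arithmetic framework.

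\textbf{Gap 1: the $O_K$-module $E_2$.} You flag the difficulty yourself but do not resolve it. Since $w_0$ is a transcendental complex number, the Taylor coefficients of $\varphi^\ast s$ at $w_0$ involve the derivatives $\varphi^{(k)}(w_0)$, which are not algebraic; the ``jet space at $w_0$'' has no natural $O_K$-structure, so Lemma \ref{siegel} cannot be applied to the map you describe. The paper sidesteps this entirely by imposing vanishing only at $K$-rational points of $X$, and this is not a cosmetic convenience but the load-bearing choice. Pointing to interpolation determinants ``as in \cite{gas1}'' is not a fix unless you actually supply the argument, and it would in any case give a different chain of estimates.

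\textbf{Gap 2: the Schwarz step.} Your inequality
$\log\|\varphi^\ast s(w)\|\le \tau\log\mathrm{diam}_{r_1}(W)+\log\|s\|_{\sup,X}$
obtained by applying the maximum modulus principle to $\varphi^\ast s/\psi^\tau$ is false as written, because $\varphi^\ast s$ is a section of a line bundle with a \emph{positively curved} metric, not a holomorphic function. After trivialising, $\|\varphi^\ast s\|(z)=|f_s(z)|e^{-\phi(z)}$ with $\phi$ subharmonic ($dd^c\phi=\varphi^\ast c_1(M)\ge0$); the maximum modulus principle controls $|f_s|$, but transferring back to $\|\varphi^\ast s\|$ introduces the oscillation $\sup_{\partial\Delta_{r_1}}\phi-\phi(w)$, which is of the order of $T_{\varphi,M}(r_1)$. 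This is precisely the term that Theorem \ref{generalFMT1} (equivalently Theorem \ref{comparing T1}) supplies, and it must appear in the Schwarz step, not only in the slope bound. Your plan places the characteristic-function contribution only in $\widehat\mu_{\max}(E_2)$; with the omitted $T$-term reinstated in the Schwarz inequality, your final contradiction has to be recomputed and it is not clear it survives with the constants you have set up.

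The paper's route avoids both problems at once: it needs no jets at a transcendental point, and the characteristic function enters exactly once, through the generalised First Main Theorem \ref{generalFMT1} applied at the test point $w\in S_\varphi(r,H)\cap W$, with the restriction $w\notin E_r$ already built into the definition of $S_\varphi(r,H)$.
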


\begin{proof}  The proof is similar to the proof of Lemma 4.5 of \cite{gas1}, but since there are some steps to change, we give here some details. 

Denote by $h^0(X,M^d)$ the rank of $H^0(X, M^d)$, it is well known that  can find a positive constant $B_1$ such that, for every $\epsilon_1>0$ and $d$ sufficiently big, we have 
\begin{equation}
B_1(1-\epsilon_1)d^n\leq h^0(X,M^d)\leq B_1(1+\epsilon_1) d^n.
\end{equation}
We also recall that, by definition of arithmetic polarization, we may suppose that $H^0(\cX,\cM^d)$ is generated by sections of norm less or equal than one. 

Fix an $\alpha>0$ and we suppose that $d$ is big enough to have that $\alpha h^0(X,L^d)>1$. 
Choose an integer $A$ such that $(1-2\alpha)h^0(X,L^d)\leq A\leq (1-\alpha)h^0(X,L^d)$ and a subset $Z_W(d)\subset S_\varphi(r,H)\cap W$ of cardinality $A$ (if the cardinality of $S_\varphi(r,H)\cap W$ is smaller then the Lemma easily follows from linear algebra). 

Denote by $E(H)$ the  $O_K$ module $\oplus_{z\in Z_W(T)}\cM^d|_{\varphi(z)}$. The rank of $E(H)$ is $A$ and $\mu_{\max}(E(H))\leq dH$.

We have a natural restriction map
\begin{equation}
\delta_H: H^0(\cX,\cM^d)\longrightarrow E(H).
\end{equation}

By Gromov theorem \ref{gromov}, if we put on $H^0(\cX, \cM^d)$ the $L_2$ hermitian structure and on $E(M)$ the direct sum hermitian structure, the norm of $\delta_H$ is bounded by $C_0d$ for a suitable constant $C_0$. 

Denote by $K(H)$ the kernel of $\delta_H$ and by $k(H)$ its rank. By construction we have that 
\begin{equation}
{{h^0(X,M^d)}\over{k(H)}}\leq{ {h^0(X,M^d)}\over {h^0(X,M^d)-(1-\epsilon)h^0(X,M^d)}}={{1}\over{\alpha}}.
\end{equation}
We may then apply Siegel Lemma \ref{siegel} and we obtain that there is a non vanishing section $s\in H^0(\cX, \cM^d)$ such that $\varphi^\ast(s)$ vanishes on every point of $E(H)$ and $\log\Vert s\Vert_{\sup}\leq C_3dH$ for a suitable constant $C_3$ independent on $H$ and $r$. 

Let $w\in S_\varphi(r,H)\cap W$ we will now prove that, there is $d_0>0$ such that, if $d\geq d_0$ then $\varphi^\ast(s)$ vanishes also on $w$.  

Suppose that $\varphi^\ast (s)$ do not vanish on $w$. By Liouville inequality \ref{***} we can find a  constant $C$ independent on $d$ such that
\begin{equation}
\log\Vert s\Vert(\varphi(w)\geq -CdH.
\end{equation}
Thus  the affine First Main Theorem \ref{generalFMT} implies that we can find positive constants $C_i$, independent on $r$ and $d$ such that
\begin{equation}
C_1d(H+T((1+\epsilon)r))\geq C_2\cdot d^n\cdot \log(-diam_{r_1}(W)).
\end{equation}
Since, by hypothesis, we suppose that 
\begin{equation}
diam_{r_1}(W)\leq \exp\left(-{{T_{\varphi,M}((1+\epsilon)r)+H}\over{ d^{n-1}}}\right),
\end{equation}
we obtain that
\begin{equation}
{{C_1}\over{d^{n-1}}}\geq C_2.
\end{equation}
And this is impossible as soon as $d$ is big enough, independently on $r$. \end{proof}
The proof of Lemma \ref{small w proj} is the same, one have to use to use the projective First Main Theorem \ref{generalFMT1} instead of Theorem \ref{generalFMT}.

 We need now to estimate how many subset  $W$ of $\Delta_r$ with $diam_{r_1}(W)\leq \exp\left(-{{T_{\varphi,M}((1+\epsilon)r)+H}\over{ d^{n-1}}}\right)$ are needed in order to cover $\Delta_r$.
 \begin{lemma}\label{covering} Let $\alpha>0$ be a real number strictly less than one. Let $N$ an integer strictly bigger than ${{5}\over{\alpha^2\cdot\epsilon}}$. Then we can find $N$ open subsets $W_1,\dots, W_N$, such that $diam_{r_1}(W_i)\leq \alpha$ and $\Delta_r\subset\cup_{i=1}^NW_i$.
\end{lemma}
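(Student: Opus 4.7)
The plan is to compare the pseudo-hyperbolic distance $d_{r_1}$ with the ordinary Euclidean distance on $\Delta_r$ and then reduce to a standard Euclidean covering argument. Since $\overline{\Delta}_r$ is relatively compact in $\Delta_{r_1}$, the two metrics are comparable on $\overline{\Delta}_r$, with an explicit constant that degenerates only as $\epsilon\to 0$.

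First I would make the metric comparison explicit: for $z,w\in\overline{\Delta}_r$, the bound $|z\overline{w}|\leq r^2$ gives $|r_1^{\,2}-z\overline{w}|\geq r_1^{\,2}-r^2=\epsilon(2+\epsilon)r^2$, and therefore
$$d_{r_1}(z,w)=\frac{r_1|z-w|}{|r_1^{\,2}-z\overline{w}|}\leq\frac{(1+\epsilon)\,|z-w|}{\epsilon(2+\epsilon)\,r}.$$
Consequently any Euclidean open ball contained in $\Delta_r$ of Euclidean radius $\rho:=\alpha\epsilon(2+\epsilon)r/(2(1+\epsilon))$ automatically has $d_{r_1}$-diameter at most $\alpha$.

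Second, I would cover $\Delta_r$ by such Euclidean balls by placing their centers on a square lattice of mesh $\rho\sqrt{2}$, keeping only those centers whose associated ball meets $\Delta_r$; then every point of $\Delta_r$ lies within Euclidean distance $\rho$ of some retained center, so the open balls $W_i$ of radius $\rho$ around the retained centers form the required cover. A trivial packing count bounds the number of retained centers by a constant multiple of $(r/\rho)^2$, and substituting the value of $\rho$ together with the hypothesis $\alpha<1$ yields the required upper bound on $N$. The main content of the proof lies in the metric comparison of the first step, a brief but essential pseudo-hyperbolic calculation; the second step is a classical volume-packing argument, so I do not anticipate any serious obstacle beyond routine bookkeeping of constants.
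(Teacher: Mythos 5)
Your overall strategy is sound as a covering argument, but it takes a genuinely different route from the paper, and the route you chose does \emph{not} yield the constant stated in the lemma. The paper works intrinsically in the hyperbolic metric: it reduces the covering number to a packing number (maximal number of points at mutual $d_{r_1}$-distance $>\alpha/2$), uses the strong triangle inequality to confine all the small balls inside a fixed hyperbolic ball $B_{r_1}(0,R)$, and then compares \emph{hyperbolic} areas, which are M\"obius-invariant and hence scale correctly at every point of $\Delta_r$. You instead compare $d_{r_1}$ to the Euclidean distance via the uniform estimate $|r_1^2-z\overline{w}|\ge \epsilon(2+\epsilon)r^2$ and cover by Euclidean balls of the worst-case radius.

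The gap is in your final sentence: substituting $\rho=\alpha\epsilon(2+\epsilon)r/(2(1+\epsilon))$ into a count proportional to $(r/\rho)^2$ gives, for small $\epsilon$,
\begin{equation}
N\;\approx\;\frac{\pi r^2}{2\rho^2}\;=\;\frac{2\pi(1+\epsilon)^2}{\alpha^2\epsilon^2(2+\epsilon)^2}\;\sim\;\frac{\pi}{2\alpha^2\epsilon^{2}},
\end{equation}
which is worse than the required $5/(\alpha^2\epsilon)$ by a factor of order $1/\epsilon$; the hypothesis $\alpha<1$ does nothing to repair this, since it does not touch the $\epsilon$-dependence. The loss is structural: your comparison constant $\epsilon(2+\epsilon)r/(1+\epsilon)$ is tight only near the boundary circle $|z|=r$, so your Euclidean balls are forced to be uniformly tiny even deep inside $\Delta_r$, where the pseudo-hyperbolic metric is actually comparable to the Euclidean one without any $\epsilon$-degeneration. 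The hyperbolic area argument in the paper is precisely what avoids this over-counting, because $\omega_{hyp}$ weights the region near the boundary more heavily. That said, the weaker bound $O(\alpha^{-2}\epsilon^{-2})$ would still be harmless in the application to Theorems \ref{BPaffine} and \ref{BPprojective}, since only the exponential factor in $N$ matters there and the extra $1/\epsilon$ gets absorbed into the implicit constant; but as a proof of Lemma \ref{covering} as stated, your argument does not close.
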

\begin{proof} For every $a\in ]0;1[$  and $z\in \Delta_{r_1}$, denote by $B_{r_1}(z,a)$ the disk $\{ w\in\Delta_{r_1}\;/ d_{r_1}(z;w)\leq a\}$. 

For every $\alpha>0$, let  $N$ be  the number $\min\{ n\in\N\;/ \; \exists\; z_1,\dots, z_n \;{\rm in}\; \Delta_r \;{\rm s.t.}\; \Delta_r\subset \cup_{i=1}^nB_{r_1}(z_i,{{\alpha}\over{2}})\}$. 
If $M=\max\{ m\in \N;/ \exists\;  w_1,\dots,w_m \;{\rm in}\; \Delta_r \;{\rm s.t.}\; d_{r_1}(w_i,w_j)>{{\alpha}\over{2}}\}$, it is easy to see that $N\leq M$. Consequently, it suffices to estimate $M$ from above.

Before we estimate $M$ let's recall some basics of hyperbolic geometry: 
\begin{enumerate}
\item If $z_0\in \Delta_{r_1}$ and $s\in]0:1[$, we denote by $B_{r_1}(z_0,s)$ the ball $\{z\in\Delta_{r_1}\;:\; d_{r_1}(z_0,z)\leq s\}$. 
\item The hyperbolic pseudo distance $d_{r_1}(z;w)$ verify the strong triangle inequality: for every triple $z_1,z_2,z_3\in \Delta_{r_1}$ we have
\begin{equation}
d_{r_1}(z_1,z_2)\leq{{d_{r_1}(z_1,z_3)+d_{r_1}(z_3,z_2)}\over{1+d_{r_1}(z_1,z_3)d_{r_1}(z_3,z_2)}}
\end{equation}
\item The measure $\omega_{hyp}:={{i}\over{2}}\cdot{{dz\wedge d\overline{z}}\over{(r_1^2-\vert z\vert^2)^2}}$ is invariant under the M\"obius transformations of the disk $\Delta_{r_1}$.
Consequently the hyperbolic area of $B_{r_1}(z_0,s)$ is $A(s)={{\pi s^2}\over{1-s^2}}$. 
\end{enumerate}
Denote by $R$ the number 
\begin{equation}
R:={{{1}\over{1+\epsilon}}+{{\alpha}\over{2}}\over{1+{{\alpha}\over{2(1+\epsilon)}}}}
\end{equation}
Since $\Delta_r=B_{r_1}(0,{{1}\over{1+\epsilon}})$ and the strong triangle inequality (2) holds, we obtain that, for every $z_1\in\Delta_r$, we have that
\begin{equation}
B_{r_1}(z_1,{{\alpha}\over{2}})\subset B_{r_1}(0,R).
\end{equation}

Let $w_1, \dots,w_m$ be a subset of $\Delta_r$ such that $d_{r_1}(w_i,w_j)>{{\alpha}\over{2}}$. Then the balls $B_{r_1}(w_j,{{\alpha}\over{2}})$ are disjoint and all contained in  $B_{r_1}(0,R)$. Consequently $m\cdot A({{\alpha}\over{2}})\leq A(R)$, which gives
\begin{equation}
m\cdot{{\left({{\alpha}\over{2}}\right)^2}\over {1-{\left({{\alpha}\over{2}}\right)^2}}}\leq {{R^2}\over{1-R^2}}.
\end{equation}
From this we obtain
\begin{equation}
m\leq {{4-\alpha^2}\over{\alpha^2}}\cdot{{(2+(1+\epsilon)\alpha)^2}\over{(2(1+\epsilon)+\alpha)^2-(2+(1+\epsilon)\alpha)^2}}\leq{{5}\over{\alpha^2\cdot\epsilon}}
\end{equation}
\end{proof}

We can now give the proof of Theorem \ref{BPaffine} (and of Theorem \ref{BPprojective} which is similar).

 \begin{proof}{\it (Of Theorem \ref{BPaffine})} Choose $d$ such that $d^{n-1}\cdot\epsilon>1$. Let $N$ be an integer strictly bigger than  ${{5}\over{\epsilon}}\cdot \exp\left(2\epsilon\cdot(T_{\varphi,M}((1+\epsilon)r)+H))\right)$. By Lemma \ref{covering} we can cover $\Delta_r$ by $N$ disks $W_i$ such that  $diam_{r_1}(W_i)\leq \exp\left(-{{T_{\varphi,M}((1+\epsilon)r)+H}\over{ d^{n-1}}}\right)$. By Lemma \ref{small w affine}, for each disk $W_i$, we can find a non zero section $s_i\in H^0(X,M^d)$ vanishing on every $w\in S_\varphi(r,H)\cap W_i$. By Proposition \ref{zeros}, applied to each $s_i$ (remark that $d$ is fixed and depends only on $\epsilon$), the cardinality of $S_\varphi(r,H)\cap W_i$ is bounded by $C_1\cdot T_{\varphi, M}((1+\epsilon)r)$ for a suitable constant $C_1$ independent on $r$ and $H$. The conclusion of the proof follows. \end{proof}`
 
 \
 
 \section{Polynomial bounds for height of rational points}\label{polynomial bounds}
 
 \
 
 Examples by Surroca \cite{surroca} and by Boxall and Jones \cite{BJ} show that Theorem \ref{BPaffine} is optimal, in the sense that we cannot hope that the growth of the number of rational points of bounded height is less than exponential in $T((1+\epsilon)r)$ and $H$ . Nevertheless, in this session we will prove that there are infinitely many sets in $\R^2$ as big as we want, such that, if $T((1+\epsilon)r), H)$ is in one of these sets, then $C_\varphi(r,H)$ is less than a polynomial in $T((1+\epsilon)r)$ and $H$ of controlled degree. 
 
 In order to quantify these sets we will now give some definitions. We will state and prove the theorems for holomorphic maps in affine varieties. One can find similar statements for morphisms in projective varieties but these are less clean, even if, {\it mutatis mutandis}, the proof is the same. We leave to the interested reader the statement and the proofs of these.

 \begin{definition} Suppose that $X$ is an affine  variety  of dimension $m>1$ equipped with an hermitian line bundle $M$ as above. Let $\varphi:\C\to X_\sigma(\C)$ be an analytic map with Zariski dense image. Let $\epsilon>0$ and $\gamma>0$. We will denote by $L(\varphi, \epsilon, \gamma)\subset\R^2$ the following set
 \begin{equation}
 L(\varphi, \epsilon):=\left\{ (T_{\varphi, M}(r),H)\in\R^2\;\;/\;\; C_\varphi(r,H)\leq \epsilon\cdot(T_{\varphi,M}((1+\epsilon)r)+H)^\gamma\right\}.
 \end{equation}
 \end{definition}
 Observe that, since $T_{\varphi,M}(r)$ is a strictly increasing function, the value of it determines $r$.
 
 In this section, we provide $(\R_{\geq 0})^2$ with the norm $\Vert(x,y)\Vert=x+y$. It is not difficult to see that the results we present are independent on the chosen norm. 
 
 \begin{definition}\begin{itemize}\item Given a subset $U\subset(\R_{\geq 0})^2$ we will denote by $m_U$ the real number $m_U:=\min\{ \Vert (x;y)\Vert\;/\; (x,y)\in U\}$.
 \item Let  $(U_n)$ be a sequence of subsets of $(\R_{\geq 0})^2$, we will say that $(U_n)$ is disjointly unbounded if  $U_{n+1}\cap\bigcup_{i=1}^nU_i=\emptyset$ and $m_{U_n}\to\infty$.
 \item Let $A>1$ be a positive number and $(U_n)$ be a sequence of subsets of $(\R_{\geq 0})^2$, we will say that $(U_n)$ is $A$--bounded if $diam(U_n\cup U_{n-1})\leq A\cdot m_{U_{n-1}}$.
 \end{itemize} \end{definition}
 
 We should imagine a disjointly bounded and $A$--bounded sequence of subsets as a sequence of disjoints sets which become bigger and bigger but such that each of them is not so far from the the set before it. A typical example is the sequence of the open disks of radius $n$ and centered in the points $(n^2,0)$ which is a disjointly bounded $3$--bounded sequence for $n\geq 2$, and even better, for every $\epsilon>0$, there is $n_0$ such that, this sequence is disjointly bounded and $\epsilon$--bounded  if $n\geq n_0$.
 
 \begin{definition} A subset $B\subset(\R_{\geq 0})^2$ is said to be $A$--small if, for every disjointly unbounded and $A$--bounded sequence of subsets $(U_n)$ we can find $n_0$ such that $U_{n_0}\cap B=\emptyset$. A subset $W$ is said to be $A$--wide, if the complementary set $(\R_{\geq 0})^2\setminus W$ is $A$--small.\end{definition}
 Observe that, considering subsequences,  if $B$ is $A$--small and $(U_n)$ is a disjointly unbounded and $A$--bounded sequence, then $U_n\cap B=\emptyset$ for infinitely many $n$'s. Similarly, $W$ is $A$--wide, then, infinitely many $U_n$'s are contained in W. 
 
 In particular, an $A$-wide set $W$ contains infinitely many open disks of arbitrarily big radius.  
 
 We will now provide an explicit sufficient condition, for a subset $B\subset(\R_{\geq 0})^2$, to be $A$--small. 

\begin{definition} A sequence $((x_n,y_n))_{n\in\N}$ is said to be $A$-subgeometric if $\Vert(x_n, y_n)\Vert\to\infty$ and, for $n\geq n_0$, we have  $\Vert(x_n, y_n)\Vert\leq A\cdot\Vert(x_{n-1}, y_{n-1})\Vert$.
\end{definition}
\begin{lemma}\label{A small sets} A subset $B\subset(\R_{\geq 0})^2$  is $A$--small if it does not contain any  $(A+1)$-subgeometric sequence.
\end{lemma}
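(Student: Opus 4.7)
The plan is to argue by contraposition: assume $B$ is \emph{not} $A$--small and produce an $(A+1)$--subgeometric sequence contained in $B$. Unpacking the definition, the hypothesis that $B$ is not $A$--small hands us a disjointly unbounded and $A$--bounded sequence $(U_n)$ of subsets of $(\R_{\geq 0})^2$ with the property that $U_n\cap B\neq\emptyset$ for every $n$. The natural move is then to choose, for each $n$, a witness $p_n=(x_n,y_n)\in U_n\cap B$, and verify that the sequence $(p_n)$ satisfies the two conditions in the definition of $(A+1)$--subgeometric.

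The first condition, $\Vert p_n\Vert\to\infty$, is immediate: since $p_n\in U_n$ we have $\Vert p_n\Vert\geq m_{U_n}$, and the disjointly unbounded hypothesis gives $m_{U_n}\to\infty$. For the recursive bound, the idea is to combine the triangle inequality with the $A$--boundedness hypothesis. Since both $p_n$ and $p_{n-1}$ lie in $U_n\cup U_{n-1}$, one gets
\begin{equation*}
\Vert p_n\Vert\leq \Vert p_{n-1}\Vert+\Vert p_n-p_{n-1}\Vert\leq \Vert p_{n-1}\Vert+\mathrm{diam}(U_n\cup U_{n-1})\leq \Vert p_{n-1}\Vert+A\cdot m_{U_{n-1}}.
\end{equation*}
Using that $p_{n-1}\in U_{n-1}$ forces $m_{U_{n-1}}\leq \Vert p_{n-1}\Vert$, this collapses to $\Vert p_n\Vert\leq(A+1)\Vert p_{n-1}\Vert$, which is exactly the inequality required by the definition.

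Hence $(p_n)$ is an $(A+1)$--subgeometric sequence contained in $B$, contradicting the assumption that $B$ contains no such sequence. I do not expect any serious obstacle here: the lemma is essentially a bookkeeping translation between two definitions. The only small point worth mentioning is that the subgeometric condition only demands the recursive bound for $n\geq n_0$, which is automatic since the bound above is used only at indices $n\geq 2$ where $U_{n-1}$ exists and $m_{U_{n-1}}>0$; the tail behavior of the sequence, where both $m_{U_{n-1}}$ and $\Vert p_{n-1}\Vert$ are large, is where the estimate is really being applied.
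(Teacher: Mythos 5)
Your proof is correct and follows essentially the same route as the paper: pick a witness $p_n\in U_n\cap B$, derive $\Vert p_n\Vert\to\infty$ from disjoint unboundedness, and get $\Vert p_n\Vert\leq(A+1)\Vert p_{n-1}\Vert$ via the triangle inequality, $A$--boundedness, and $m_{U_{n-1}}\leq\Vert p_{n-1}\Vert$. You actually spell out the triangle-inequality chain more explicitly than the paper does, and you correctly state the conclusion as an $(A+1)$--subgeometric sequence (the paper's proof text says ``$A$--subgeometric'' at that point, which is a typo, since the bound it establishes is $(A+1)\Vert p_{n-1}\Vert$).
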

Another way to state this lemma is:  $B\subset(\R_{\geq 0})^2$ is $A$--small if every  $(A+1)$-subgeometric sequence has a subsequence disjoint from $B$.
\begin{proof} Suppose that $B$ is not $A$--small. Then there exists a disjointly unbounded and $A$--bounded sequence of subsets $(U_n)$ such that $U_n\cap B\neq\emptyset$ for every $n$. Let $(x_n,y_n)\in U_n\cap B$, then the sequence $(x_n,y_n)$ is contained in $B$ and it is $A$--subgeometric: 

-- $\Vert(x_n,y_n)\Vert\to\infty$ because $m_{U_n}\to\infty$;

-- $\Vert(x_n,y_n)\Vert= \Vert(x_n-x_{n-1}+x_{n-1};y_n-y_{n-1}+y_{n-1})\Vert\leq (A+1)\cdot\Vert(x_{n-1},y_{n-1})\Vert$ because $(U_n)$ is $A$--bounded.
\end{proof} 

The main Theorem of this session is:
\begin{theorem}\label{poly bound thm} Suppose that $X$ is an affine  variety  of dimension $m>1$ equipped with an hermitian line bundle $M$ as above. Let $\varphi:\C\to X_\sigma(\C)$ be an analytic map with Zariski dense image. Let $\epsilon>0$ and $\gamma>{{m}\over{m-1}}$. For every positive real number $A$, the set $L(\varphi, \epsilon,\gamma)$ is $A$--wide.
\end{theorem}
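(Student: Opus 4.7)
\textbf{The plan} is to establish a uniform polynomial estimate
\[
C_\varphi(r,H)\ll T_{\varphi,M}((1+\epsilon)r)^{m/(m-1)}\qquad\text{whenever $C_\varphi(r,H)$ is sufficiently large,}
\]
with implicit constants depending only on $\varphi$, $M$ and $\epsilon$. Such an $H$--uniform bound will imply the theorem immediately: the complement of $L(\varphi,\epsilon,\gamma)$ in $(\R_{\geq 0})^2$ will then be bounded, any bounded set contains no $(A+1)$--subgeometric sequence (whose norm tends to infinity), and Lemma \ref{A small sets} converts this into $A$--smallness of the complement, i.e.\ $A$--wideness of $L(\varphi,\epsilon,\gamma)$.

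To produce the estimate, I fix $(r,H)$ and write $N:=C_\varphi(r,H)$, assumed large. Since the Hilbert function of the arithmetic polarization gives $h^0(X,M^d)\sim B_1 d^m$, I choose the smallest integer $d$ with $h^0(X,M^d)>2N$, so that $d\ll N^{1/m}$. Then, mimicking the proof of Lemma \ref{small w affine}, I consider the $O_K$--linear evaluation
\[
\delta_H:H^0(\cX,\cM^d)\longrightarrow E(H):=\bigoplus_{w\in S_\varphi(r,H)}\cM^d\bigl|_{\varphi(w)},
\]
whose target has rank $N$ and whose source has rank $>2N$, so $\delta_H$ is non-injective with kernel of rank $\geq N$. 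The bounds $\widehat{\mu}_{\max}(E(H))\leq dH$ and the operator-norm control of $\delta_H$ via Gromov's inequality \ref{gromov} are exactly as in Lemma \ref{small w affine}, so Siegel's Lemma \ref{siegel} yields a non-zero $s\in\ker(\delta_H)\subset H^0(X,M^d)$, i.e.\ a section vanishing at every point of $S_\varphi(r,H)$.

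Next I bound $N$ via the zeros of $\varphi^*(s)$. Zariski-density of the image of $\varphi$ guarantees $\varphi^*(s)\not\equiv 0$, and by construction $\deg(\varphi^*(s))_r\geq N$. I then feed $s$ into Proposition \ref{zeros}, carefully tracking the $d$--dependence of its constants: the First Main Theorem at $w_0$ applied to a section of $M^d$ involves $T_{\varphi,M^d,w_0}=d\cdot T_{\varphi,M,w_0}$, and the norm-comparison constant satisfies $|\log B_1|\ll d$ on the finite-dimensional space $H^0(X,M^d)$. Combined with Proposition \ref{easyFMT}, this yields
\[
\deg(\varphi^*(s))_r\leq C'\cdot d\cdot\bigl(T_{\varphi,M}((1+\epsilon)r)+1\bigr),
\]
with $C'$ independent of $d$, $s$ and $(r,H)$. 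Substituting $d\ll N^{1/m}$ into $N\leq\deg(\varphi^*(s))_r$ gives $N^{(m-1)/m}\ll T_{\varphi,M}((1+\epsilon)r)+1$, which is the desired uniform polynomial bound.

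\textbf{The main obstacle} is the bookkeeping of the $d$--dependence in Proposition \ref{zeros}: this is the hinge that converts the Siegel existence of $s$ into a quantitative bound on $N$, and the exponent $m/(m-1)$ emerges automatically from the interplay $N\ll d\,T$ together with $d\ll N^{1/m}$. Once this is in hand, the hypothesis $\gamma>m/(m-1)$ forces $(T_{\varphi,M}(r),H)\in L(\varphi,\epsilon,\gamma)$ as soon as $T_{\varphi,M}(r)+H$ exceeds a threshold depending only on $\varphi,M,\epsilon,\gamma$, from which $A$--wideness of $L(\varphi,\epsilon,\gamma)$ follows by Lemma \ref{A small sets}.
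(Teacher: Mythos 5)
The proposal does not work, and the reason is already visible in its stated plan. You claim an $H$--uniform estimate $C_\varphi(r,H)\ll T_{\varphi,M}((1+\epsilon)r)^{m/(m-1)}$ (for $C_\varphi$ large), whose right side is bounded whenever $r$ is fixed. But for a fixed disk the number of rational points of height $\leq H$ can, and does, grow unboundedly in $H$: this is exactly the content of the Surroca / Boxall--Jones examples cited in the opening of Section \ref{polynomial bounds}, which show that the $\exp(\epsilon(H+T))$ bound of Theorem \ref{BPaffine} is essentially sharp. A bound that is independent of $H$ would force, for fixed $r$, a uniform cap on $C_\varphi(r,H)$, in contradiction with those examples. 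So the global strategy of showing that the complement of $L(\varphi,\epsilon,\gamma)$ is bounded cannot succeed; the correct statement (and the one the paper proves) is genuinely weaker, namely that the complement contains no $(A+1)$--subgeometric sequence.

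The technical place where the argument breaks is the assertion that "the norm-comparison constant satisfies $|\log B_1|\ll d$." In Proposition \ref{zeros} the constants $C_i=C_i(\varphi,M,d,\epsilon)$ are allowed to depend on $d$ in an uncontrolled way, and the constant $B_1=B_1(d)$ is precisely the minimal ratio
\[
B_1(d)=\inf\left\{\Vert\varphi^\ast(s)\Vert_{\infty,r_0}\;:\; s\in H^0(X,M^d),\ \Vert s\Vert_{\infty,X}=1\right\}.
\]
The set $\varphi(\overline\Delta_{r_0})$ is a one-dimensional analytic piece inside a variety of dimension $m>1$, hence pluripolar, so no pluripotential comparison of the form $-\log B_1(d)\ll d$ is available. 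Worse, it is false: choose $s$ so that $\varphi^\ast(s)$ vanishes to the maximal possible order at $0$, which is of size $h^0(X,M^d)-1\sim d^m$; the Schwarz lemma then forces $\Vert\varphi^\ast(s)\Vert_{\infty,r_0}\leq c^{d}\cdot\rho^{d^m}$ for some $\rho<1$, giving $-\log B_1(d)\gg d^m$, not $\ll d$. Feeding this into your computation, $\deg(\varphi^\ast(s))_r\ll d\cdot T + d^m$, and after substituting $d\sim N^{1/m}$ the extra term $d^m\sim N$ swallows $N$ and the argument yields nothing. Bounding $B_1(d)$ from below (a "multiplicity estimate" in the language of transcendence theory) is a deep question whose answer depends on the diophantine nature of $\varphi$, and cannot be taken for granted.

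The paper avoids this pitfall by keeping $d$ fixed. In the actual proof, one fixes $d_1$ depending only on the first term $(T_1,H_1)$ of the hypothetical subgeometric sequence, constructs via Siegel's Lemma a single section $s\in H^0(X,M^{d_1})$ vanishing at a carefully chosen subset $R_1\subset S_\varphi(r_1,H_1)$ of cardinality $\sim d_1^m$, and then uses the Liouville inequality together with Theorem \ref{generalFMT} to show that $\varphi^\ast(s)$ in fact vanishes at all of $S_\varphi(r_1,H_1)$ and, inductively along the subgeometric sequence, at all of $S_\varphi(r_n,H_n)$ for every $n$. Since $d_1$ is fixed, Proposition \ref{zeros} applies with fixed constants and the resulting zero count $\deg(\varphi^\ast(s))_{r_n}\leq C_1 T_n+C_2$ is eventually incompatible with $C_\varphi(r_n,H_n)\geq\epsilon(T_n+H_n)^\gamma$, using $\gamma>1$ and the subgeometric control on $(T_n+H_n)$. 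Your Siegel-and-count skeleton is recognizable as the first step of this scheme, but the Liouville propagation along the sequence and the fixed choice of $d_1$ are both essential and are missing from your argument.
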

 \begin{proof} Let $B:=(\R_{\geq 0})^2\setminus L(\varphi, \epsilon)$. We have to prove that $B$ is $A$-small. 
 
 Suppose that $B$ is not $A$-small, then, by Lemma \ref{A small sets}, it contains a $(A+1)$-subgeometric sequence. More explicitly, if we suppose that $B$ is not $A$--small, we can find a sequence $(r_n, H_n)$ such that:
 
 a) $T_{\varphi,M}((1+\epsilon)r_n)+H_n\to\infty$;
 
 b) $T_{\varphi,M}((1+\epsilon)r_n)+H_n\leq (A+1)\cdot(T_{\varphi,M}((1+\epsilon)r_{n-1})+H_{n-1})$;
 
 c)  $C_\varphi(r_n,H_n)\geq \epsilon\cdot(T_{\varphi,M}((1+\epsilon)r_n)+H_n)^\gamma$.
 
 We suppose that $\epsilon\cdot(T_{\varphi,M}((1+\epsilon)r_1)+H_1)$ is big enough. Fix $a>0$ such that ${{m}\over{m-1}}\cdot(1+a)<\gamma$ and take an integer $d_1$ such that 
 \begin{equation}
 (\epsilon\cdot(T_{\varphi,M}((1+\epsilon)r_1)+H_1))^{{1+a}\over{m-1}}\leq d_1\leq (\epsilon\cdot(T_{\varphi,M}((1+\epsilon)r_1)+H_1))^{{1+a}\over{m-1}}.
 \end{equation}
 We also fix a subset $R_1\subseteq S_\varphi(r_1,H_1)$ of cardinality $A_1$ where $A_1$ is an integer verifying $(1-2\epsilon)h^0(X,M^{d_1})\leq A_1\leq (1-\epsilon)h^0(X,M^{d_1})$.  Since $M$ is ample, there is a constant $B$ such that $Bd_1^m\leq h^0(X,M^{d_1})\leq Bd_1^m+1$ as soon as $d_1$ is big enough.
 
 Using Siegel Lemma \ref{siegel}  in the same way as in the proof of Lemma \ref{small w affine}, we can construct a non vanishing  integral section $s\in H^0(X,M^{d_1})$ such that,$\varphi^\ast(s)$ vanishes on all the points of $R_1$ and such that $\log\Vert s\vert\leq C_1\cdot H_1\cdot d_1$, where $C_1$ is a positive constant, independent on $r$ and $H$.
 
 We now prove that $\varphi^\ast(s)$ vanishes in every point of $S_\varphi(r_1,H_1)$. 
 
 Let $w_0$ be a point of $S_\varphi(r_1,H_1)$ and suppose that $\varphi^\ast(s)(w_0)\neq 0$. By Liouville inequality \ref{***}, we have that
 \begin{equation}
 \log\Vert s(\varphi)(w_0)\Vert\geq -C_2\cdot d_1\cdot H_1.
 \end{equation}
 Where, $C_2$ is independent on $r$ and $H$. Consequently, since, there is a positive constant $C_3$ independent on $r$ such that, for every $z\in \Delta_r$ we have that $g_{(1+\epsilon)r}(w_0,z)\geq C_3$ and $\varphi^\ast(s)$ vanishes on $R_1$, Theorem \ref{generalFMT} implies that we can find constants $C_i$ independent on $r$, $H$ and $w_0$ such that 
 \begin{equation}
 C_1\cdot H_1\cdot d_1+d_1\cdot T_{\varphi, M}((1+\epsilon)r_1)\geq C_3\cdot A_1-C_2\cdot d_1\cdot H_1
 \end{equation}
 which, once we substitute the value of $d_1$ and $A_1$ gives
 \begin{equation}
 C_4(T_{\varphi, M}((1+\epsilon)r_1)+H_1)^{{{1+a}\over{m-1}}+1}\geq C_5\cdot (T_{\varphi, M}((1+\epsilon)r_1)+H_1)^{{{m(1+a)}\over{m-1}}}
 \end{equation}
 where, again, the $C_i$'s are independent on $r$ and $H$. Consequently, for $(T_{\varphi,M}((1+\epsilon)r_1)+H_1)$ large enough, we find a contradiction. Thus $\varphi^\ast(s)(w_0)=0$ for every $w_0\in S_\varphi (r_1,H_1)$. 
 
 By induction on $n$, we prove now prove that,  for every $n$, the section $\varphi^\ast(s)$ vanishes on the points of $S_\varphi (r_n,H_n)$.
 
 We may suppose that $\varphi^\ast(s)$ vanishes on all the points of $S_\varphi (r_{n-1},H_{n-1})$.
 
 Let $w_1\in S_\varphi (r_n,H_n)$ and suppose that $\varphi^\ast(s)(w_1)\neq 0$.
 
 Again, by Liouville inequality, we can find a constant $D_1>0$, independent on $n$ such that $\log\Vert s(\varphi)(w_1)\Vert\geq -D_1\cdot H_n$. And, again by Theorem \ref{generalFMT}, we can find constants $D_i$ independent on $n$ such that
 \begin{equation}
 D_2\cdot(T_{\varphi,M}((1+\epsilon)r_n)+H_n)\geq D_3\cdot C_\varphi(r_{n-1}, H_{n-1}). 
  \end{equation}
 But since we suppose that the sequence satisfies conditions (b)  and (c) above, we find constants $D_i$, independent on $n$, such that
 \begin{equation}
 D_4\cdot (T_{\varphi,M}((1+\epsilon)r_{n-1})+H_{n-1})\geq (T_{\varphi,M}((1+\epsilon)r_{n-1})+H_{n-1})^\gamma.
\end{equation}
 and this is impossible as soon as we suppose all the $(T_{\varphi,M}((1+\epsilon)r_n)+H_n)$'s are big enough. 
 
 Now, for every $n$, the section   $\varphi^\ast(s)$ should vanish on all the points of $S_\varphi (r_n,H_n)$. If the $T_{\varphi,M}(r_n)$ are bounded, this means that the $r_n$ are bounded and $\varphi^\ast(s)$ cannot have infinitely man zeros on $\Delta_R$ for a fixed $R$. If the $T_{\varphi,M}(r_n)$'s are not bounded, then the lower bound (c) of the sequence is in contradiction with Theorem \ref{zeros}. Thus such a section cannot exist.

Consequently the sequence $(r_n, H_n)$ verifying (a), (b) and (c) as above cannot exist and $B$ has to be $A$--small. The conclusion follows. \end{proof}

\end{document}